\def\R {\mathbb R}
\def\Z {\mathbb Z}
\renewcommand{\div}{\mathop{\hbox{\rm div}}}
\newcommand{\curl}{\mathop{\hbox{\rm curl}}}
\newcommand{\sign}{\mr{sign}}
\newcommand{\lk}{\ell\raisebox{-2pt}{$\!\kappa$}}
\renewcommand{\o}{\mr{o}}
\newtheorem{theo}{Theorem}
\newtheorem{lemma}[theo]{Lemma}
\newtheorem{defin}[theo]{Definition}
\newtheorem{alg}{Algorithm}
\newcommand{\mb}{\mathbf}
\newcommand{\bs}{\boldsymbol}
\newcommand{\lra}{\longrightarrow}
\newcommand{\mr}{\mathrm}
\newcommand{\mc}{\mathcal}
\newcommand{\mk}{\mathfrak}
\newcommand{\T}{\mc{T}}
\newcommand{\vv}{\mb{v}}
\newcommand{\E}{\mc{E}}
\newcommand{\F}{\mc{F}}
\newcommand{\G}{\mc{G}}
\newcommand{\A}{\mc{A}}
\newcommand{\B}{\mc{B}}
\newcommand{\K}{\mc{K}}
\newcommand{\cb}{\sigma_{_{\!\B'}}}%{\sigma_{_{\!\scriptstyle\B'}}}
\begin{document}

\title{Geometric construction of bases of $H_2(\overline \Omega, \partial \Omega; \Z)$}

\author{Ana Alonso Rodr\'{i}guez\thanks{Dipartimento di Matematica, Universit\`a di Trento, 38123 Povo (Trento), Italy} \and Enrico Bertolazzi \thanks{Dipartimento di Ingegneria Industriale, Universit\`a di Trento, 38123 Mesiano (Trento), Italy} \and Riccardo Ghiloni\footnotemark[1] \and Ruben Specogna\thanks{Universit\`a di Udine, Dipartimento di Ingegneria Elettrica, Gestionale e
Meccanica, Via delle Scienze 208, 33100 Udine, Italy}}

%\date{}

\maketitle

\begin{abstract}
We present an efficient algorithm for the construction of a basis of $H_2(\overline{\Omega},\partial\Omega;\Z)$ via the Poincar\'e-Lefschetz duality theorem. Denoting by $g$ the first Betti  number of $\overline \Omega$ the idea is to find, first $g$ different $1$-boundaries of $\overline{\Omega}$ with supports contained in $\partial\Omega$ whose homology classes in $\R^3 \setminus \Omega$ form a basis of $H_1(\R^3 \setminus \Omega;\Z)$, and then to construct in $\overline{\Omega}$ a homological Seifert surface of each one of these $1$-boundaries. The Poincar\'e-Lefschetz duality theorem ensures that the relative homology classes of these homological Seifert surfaces in $\overline\Omega$ modulo $\partial\Omega$ form a basis of $H_2(\overline\Omega,\partial\Omega;\Z)$. We devise a simply procedure for the construction of the required set of $1$-boundaries of $\overline{\Omega}$ that, combined with a fast algorithm for the construction of homological Seifert surfaces, allows the efficient computation of a basis of $H_2(\overline{\Omega},\partial\Omega;\Z)$ via this very natural geometrical approach. Some numerical experiments show the efficiency of the method and its performance comparing with other algorithms.
\end{abstract}

%%%%%%%

\section{Introduction}

%%***** REFERENCES *****

% We present an efficient algorithm for the construction of a basis of $H_2(\overline{\Omega},\partial\Omega;\Z)$. There are many

% Another application of cohomology computations concerns computer graphics and geometry processing algorithms (see \cite{VBW13}). *** Svilupare a decidere dove va ***).

Consider a bounded domain $\Omega$ of $\R^3$ whose closure $\overline{\Omega}$ is polyhedral and whose boundary $\partial \Omega$ is sufficiently regular, like that used for finite element approximation of differential problems. Our aim is to develop a set of fast and robust algorithms for the automatic identification and construction of that homological structures that influence the solvability of differential problems defined on $\Omega$. Let us consider, for instance,  the curl-div system
\[
\begin{array}{ll}
\curl {\bf u} = {\bf F} & \hbox{in } \Omega \\
\div {\bf u} = G & \hbox{in } \Omega \\
{\bf u} \cdot n = g & \hbox{on } \partial \Omega
\end{array}
\]
It is well-know that the solution of this problem is not unique if $g$, the first Betti number of $\Omega$, is greater than zero. Two different ways to fix a unique solution are to prescribe the circulation around a set of $1$-cycles in $\Omega$ that are representatives of a basis of the first homology group $H_1(\overline{\Omega};\Z)$ of $\overline{\Omega}$ or to prescribe the flux through a set of surfaces that are representatives of a basis of the second relative homology group $H_2(\overline{\Omega},\partial\Omega;\Z)$ of $\overline{\Omega}$ modulo $\partial\Omega$.

Let us consider a triangulation of $\overline{\Omega}$; namely, a tetrahedral mesh of $\overline{\Omega}$. The incidence matrices of such a triangulation, tetrahedra-to-faces, faces-to-edges and edges-to-vertices, are the integer matrix representations of the so-called boundary operators associated with the given triangulation. The standard procedure to compute the homology and cohomology groups of $\overline{\Omega}$ is based on the computation of the Smith normal form of these integer matrices, a computationally demanding algorithm even in the case of sparse matrices (see e.g. \cite{Mun84} and \cite{Ilio89,DSV01}). Thus, before the Smith normal form procedure is employed, the problem size is reduced using fast algorithms (usually algorithms that run in linear time) that remove homologically irrelevant parts of the triangulation (see e.g. \cite{Spec5_10}, \cite{MB09}). An implementation of these techniques have been integrated in the finite element mesh generator {\em Gmesh} by Pellikka {\em et al.} (see  \cite{PSKG13}). Other software that perform homology and cohomology computations, with less emphasis on finite element modeling, are {\em CHomP}~\cite{CHomP}, jPlex~\cite{jPlex} and {\em GAP homology}~\cite{GAP}.
A different approach, using chain contraction instead of the classical reduction algorithms, is described in \cite{PR15}, the computational cost is higher but it has more functionalities, since it provides more comprehensive homological information.

If the goal is to construct a basis of $H_2(\overline{\Omega},\partial\Omega;\Z)$, specific algorithms could be more efficient that generic algorithms for the computation of homology and cohomology groups.

A specific approach for the construction of a basis of $H_2(\overline{\Omega},\partial\Omega;\Z)$ has been proposed by Kotiuga in \cite{Kot87}, \cite{Kot88}, \cite{Kot89} and \cite{GK04}. There the aim is to construct the so-called {\em ``cuts'' } of $\Omega$; namely, surfaces-with-boundary $\{S_i\}_{i=1}^g$ of $\overline{\Omega}$ with $\partial S_i \subset \partial \Omega$ which permit to construct a single-valued magnetic scalar potential in $\Omega \setminus \bigcup_{i=1}^g S_i$ of any given current density in $\Omega$. These cuts are nonsingular polyhedral representatives of a basis of $H_2(\overline{\Omega},\partial\Omega;\Z)$. The algorithm consists in two main steps. Starting with a basis of $H_1(\overline{\Omega};\Z)$, in the first step, one constructs a basis $\{f_i\}_{i=1}^g$ of the cohomology group $H^1(\overline{\Omega};\Z)$ approximating a differential problem with a finite element method. Then the second step is to construct the cuts of $\Omega$ as level sets of the maps $\{ f_i\}_{i=1}^g$. The representatives of the basis are regular surfaces and this justify the substantial  complexity of the procedure.
%\footnote{\color{red}{Si potrebbe forse aggiungere qualche commento sulla complessit\`a di questo algoritmo.}}

In this paper we focus on the construction of a basis of $H_2(\overline{\Omega},\partial\Omega;\Z)$ using a geometric approach based on the Poincar\'e-Lefschetz duality theorem. Here we are not interested in questions concerning regularity. Indeed the representatives of the basis that we construct are formal linear combinations (with integer coefficients) of oriented faces of the given triangulation that we call homological Seifert surfaces. This allows to gain in efficiency from the computational point of view.

Let us precise what we meant when we said that the boundary $\partial \Omega$ of $\Omega$ is sufficiently regular. In what follows we will assume that $\partial \Omega$ is {\em locally flat}; that is, for every point $x \in \partial \Omega$, there exist an open neighborhood $U_x$ of $x$ in $\R^3$ and a homeomorphism $\phi_x:U_x \lra \R^3$ such that $\phi_x(U_x \cap \partial \Omega)=P$, where $P$ is the coordinate plane $\{(x,y,z) \in\R^3 \, | \, z=0\}$  (see \cite{Brown62, BFG10}). This kind of domains includes all Lipschitz polyhedral domains, but also domains like the crossed bricks (see, e.g., Fig. 3.1 in \cite{Monk}). Let $\T$ be a triangulation of $\overline{\Omega}$.
A $1$-cycle $\gamma$ of $\T$ is a formal linear combination (with integer coefficients) of oriented edges of $\T$ with zero boundary. The $1$-cycle $\gamma$ is said to be a $1$-boundary of $\T$ if it is equal to the boundary of a formal linear combination $S$ of oriented faces of $\T$. If such a $S$ exists, we call it {\em homological Seifert surface of $\gamma$ in $\T$}.

Given $g$ different $1$-boundaries $\{ \sigma'_n \}_{n=1}^g$ of $\T$ with supports contained in $\partial\Omega$ and whose homology classes in $\R^3 \setminus \Omega$ form a basis of $H_1(\R^3 \setminus \Omega;\Z)$, and given for each $n=1,\dots,g$ a homological Seifert surface $S_n$ of $\sigma'_n$ in $\T$, the Poincar\'e-Lefschetz duality theorem ensures that the relative homology classes of the surfaces $\{ S_n\}_{n=1}^g$ in $\overline\Omega$ modulo $\partial\Omega$ form a basis of $H_2(\overline\Omega,\partial\Omega;\Z)$.
In \cite{ABGS15} we propose and analyze a very efficient algorithm that, given a $1$-boundary $\gamma$ of $\T$, computes a homological Seifert surfaces of $\gamma$ in $\T$. Hence this algorithm allows the construction of a basis of $H_2(\overline\Omega,\partial\Omega;\Z)$ once we know a set of $1$-boundaries $\sigma'_1,\ldots,\sigma'_g$ of $\T$  with supports contained in $\partial\Omega$ and whose homology classes in $\R^3 \setminus \Omega$ form a basis of $H_1(\R^3 \setminus \Omega;\Z)$.

If $\partial \Omega$ is connected, an algorithm for the construction of such a set of $1$-boundaries have been analyzed in \cite{HO02}. The first step is to construct a set of $2g$ 1-cycles $\{\gamma_l \}_{l=1}^{2g}$ of $\partial \Omega$ that are representatives of a basis of $H_1(\partial \Omega;\Z)$. The second step is to compute $g$ linear combinations $\{\widehat \sigma_n = \sum_{l=1}^{2g} B_{n,l} \gamma_l\}_{n=1}^g$ of these $2g$ $1$-cycles $\gamma_l$, whose homology classes in $\R^3 \setminus \Omega$ form a basis of the homology group $H_1(\R^3 \setminus \Omega; \Z)$. If $\partial \Omega$ is connected, the $1$-cycles $\widehat \sigma_n$ of $\partial\Omega$ turn out to be also $1$-boundaries of $\overline{\Omega}$ so we can take $\sigma'_n = \widehat \sigma_n$ for $n=1,\dots,g$.

 In \cite{ABGV13} the authors extend to the case of a non connected boundary $\partial \Omega$ the construction of representatives of a basis of $H_1(\partial \Omega; \Z)$ and then the construction of $g$ independent linear combinations of these 1-cycles that are representatives of a basis of $H_1(\R^3 \setminus \Omega; \Z)$. But, being $\partial \Omega$ not connected, the elements of this basis of $H_1(\R^3 \setminus \Omega; \Z)$ are not necessarily $1$-boundaries of $\overline{\Omega}$.

For instance in Figure~\ref{fig:intro1}(a) the domain $\Omega$ is an open solid torus with a coaxial smaller closed solid torus removed and the homology class of the $1$-cycle $\widehat{\sigma}_1$ of $\partial \Omega$, represented by a continuous line, is different from zero in $H_1(\R^3 \setminus \Omega; \Z)$ (indeed it is a representative of an element of a basis of $H_1(\R^3 \setminus \Omega; \Z)$), but $\widehat{\sigma}_1$ is not a $1$-boundary of $\overline{\Omega}$. To obtain a $1$-boundary $\sigma'_1$ of $\overline{\Omega}$ homologous to $\widehat{\sigma}_1$ in $\overline{\Omega}$, we need to add a $1$-cycle $\sigma^*_1$ of $\partial \Omega$, like the one represented by the dotted line: $\sigma'_1:=\widehat{\sigma}_1+\sigma^*_1$. Now $\sigma'_1$ is the boundary of the homological Seifert surface $S_1$ of $\overline{\Omega}$ represented in Figure~\ref{fig:intro1}(a). Analogously the homology class of the $1$-cycle $\widehat{\sigma}_2$ of $\partial \Omega$, represented by a continuous line in Figure~\ref{fig:intro1}(b) , is different from zero in $H_1(\R^3 \setminus \Omega; \Z)$. Together with $\widehat{\sigma}_1$ they are representatives of a  basis of $H_1(\R^3 \setminus \Omega; \Z)$. However $\widehat \sigma_2$  is not a $1$-boundary of $\overline{\Omega}$ and to obtain a $1$-boundary $\sigma'_2$ of $\overline{\Omega}$ homologous to $\widehat{\sigma}_2$ in $\overline{\Omega}$, we need to add a $1$-cycle $\sigma^*_2$ of $\partial \Omega$, like the one represented by the dotted line: $\sigma'_2:=\widehat{\sigma}_2+\sigma^*_2$ is the boundary of the homological Seifert surface $S_2$ of $\overline{\Omega}$ represented in Figure~\ref{fig:intro1}(b).

\begin{figure}[!htb]
\centering
 \includegraphics[width=.75\textwidth]{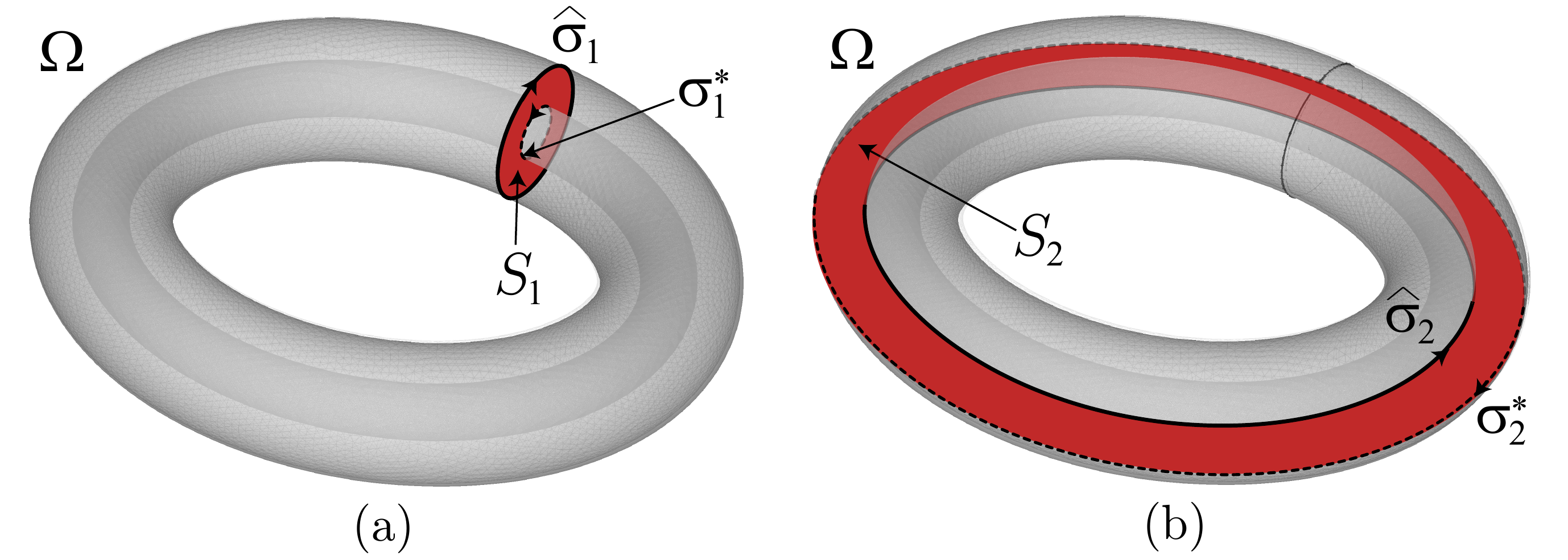}
  \caption{The boundaries.} \label{fig:intro1}
\end{figure}

The main theoretical result of this paper is, starting from a set of $2g$ $1$-cycles of $\partial \Omega$ representing a basis of $H_1(\partial \Omega; \Z)$, to identify a set of $g$ {\em $1$-boundaries of $\overline{\Omega}$} whose homology classes in $\R^3 \setminus \Omega$ form a basis of $H_1(\R^3 \setminus \Omega;\Z)$. This is done in Section 2. In Section 3 we make precise some implementation issues concerning the effective construction of the mentioned $1$-boundaries. Moreover, for the sake of completeness, we briefly describe the algorithm for the construction of homological Seifert surfaces studied in \cite{ABGS15}. The main tool in both cases is the closed block dual barycentric complex of a triangulation.
Combining this two procedures we obtain an algorithm for the construction of a basis of $H_2(\overline{\Omega},\partial\Omega;\Z)$. Finally in Section 4 we present some numerical results illustrating the robustness and efficiency of this geometrical approach. We include also some comparisons with the results obtained using the cohomology solver integrated in {\em Gmsh}.

%Let $\T=(V,E,F,K)$ be a finite triangulation of $\overline \Omega$ where $V$ is the set of vertices, $E$ the set of edges, $F$ the set of faces and $K$ the set of tetrahedra of $\T$.  Let $\T_\partial =(V_\partial, E_\partial, F_\partial)$ be the triangulation of $\partial \Omega$ induced by $\T$; namely we have that $V_\partial=V \cap \partial \Omega$, $E_\partial$ is the set of edges of $\T$ with both vertices in $V_\partial$ and $F_\partial$ is the set of faces with all vertices in $V_\partial$.

%The paper is organized as follows. In Section 2 we study how to construct $1$-boundaries $\sigma'_1,\ldots,\sigma'_g$ of $\T$ whose homology classes in $\R^3 \setminus \Omega$ form a basis of $H_1(\R^3 \setminus \Omega;\Z)$ and with supports contained in $\partial\Omega$ even if $\partial\Omega$ is not connected. For the sake of completeness in Section 3 we briefly describe the algorithm for the construction of homological Seifert surfaces studied in \cite{ABGS15}. Finally in Section 4 we present some numerical results illustrating the robustness and efficiency of this geometrical approach.

\section{The construction of the $1$-boundaries}

Let $\T=(V,E,F,K)$ be a finite triangulation of $\overline \Omega$ where $V$ is the set of vertices, $E$ the set of edges, $F$ the set of faces and $K$ the set of tetrahedra of $\T$.  Let $\T_\partial =(V_\partial, E_\partial, F_\partial)$ be the triangulation of $\partial \Omega$ induced by $\T$; namely we have that $V_\partial=V \cap \partial \Omega$, $E_\partial$ is the set of edges of $\T$ with both vertices in $V_\partial$ and $F_\partial$ is the set of faces with all vertices in $V_\partial$.

As indicated in the introduction, if $\partial\Omega$ is connected, then the desired $1$-boundaries $\sigma'_m$ are constructed in \cite{HO02}. More precisely, under this connectedness condition the authors construct $1$-cycles $\sigma_1,\ldots,\sigma_g,\widehat{\sigma}_1,\ldots,\widehat{\sigma}_g$ of $\T_\partial$ in such a way that their homology classes in $\T_\partial$ form a basis of $H_1(\T_\partial;\Z)$ and it holds:
\begin{itemize}
 \item $\sigma_1,\ldots,\sigma_g$ bounds in $\R^3 \setminus \Omega$ and their homology classes in $\Omega$ form a basis of $H_1(\Omega;\Z)$,
 \item $\widehat{\sigma}_1,\ldots,\widehat{\sigma}_g$ bounds in $\Omega$ and their homology classes in $\R^3 \setminus \Omega$ form a basis of $H_1(\R^3 \setminus \Omega;\Z)$.
\end{itemize}
By defining $\sigma'_m:=\widehat{\sigma}_m$ for every $m \in \{1,\ldots,g\}$, we are done.
We now consider the more complicated case in which $\partial\Omega$ is not connected.

Let us recall some results from Section 6 of \cite{ABGV13}. As we have said, $\Gamma_0,\Gamma_1,\ldots,\Gamma_p$ denote the connected components of $\partial\Omega$. By the Jordan separation theorem, each open subset $\R^3 \setminus \Gamma_r$ of $\R^3$ has two connected components, both having $\Gamma_r$ as boundary. Denote by $D_r$ the bounded connected component of $\R^3 \setminus \Gamma_r$ and by $g_r$ the first Betti number of its closure $\overline{D_r}$ in $\R^3$. Rearranging the indices $r \in \{0,1,\ldots,p\}$ if necessary, we can suppose that $\Gamma_0$ is the ``external'' component of $\partial\Omega$; namely, it holds: $\overline{\Omega}=\overline{D_0} \setminus \bigcup_{r=1}^pD_r$ and hence $\R^3 \setminus \Omega=(\R^3 \setminus D_0) \cup \bigcup_{r=1}^p\overline{D_r}$. The reader reminds that $H_1(\partial\Omega;\Z)$ is isomorphic to $\bigoplus_{r=0}^pH_1(\Gamma_r;\Z)$, so we have that $2g=\sum_{r=0}^p2g_r$ or, equivalently, $g=\sum_{r=0}^pg_r$.

For convenience, if $c$ is a $1$-cycle of $\R^3$ with support contained in a subset $Z$ of $\R^3$, then we denote by $[c]_Z$ the homology class of $c$ in $Z$.

For every $r \in \{0,1,\ldots,p\}$, $\partial D_r=\Gamma_r$ is connected, so, as we said above, we can construct $1$-cycles $\{\sigma_{r,s}\}_{s=1}^{g_r} \cup \{\widehat{\sigma}_{r,s}\}_{s=1}^{g_r}$ of $\T_\partial$ with support contained in $\Gamma_r$ such that:
\begin{equation} \label{eq:1}
\text{$\big\{[\sigma_{r,1}]_{\overline{D_r}}, \ldots,[\sigma_{r,g_r}]_{\overline{D_r}}\big\}$ is a basis of $H_1(\overline{D_r};\Z)$},
\end{equation}
\begin{equation} \label{eq:2}
\text{$[\sigma_{r,s}]_{\R^3 \setminus D_r}=0 \;$ for every $s \in \{1,\ldots,g_r\}$}
\end{equation}
and
\begin{equation} \label{eq:3}
\text{$\big\{[\widehat{\sigma}_{r,1}]_{\R^3 \setminus D_r}, \ldots,[\widehat{\sigma}_{r,g_r}]_{\R^3 \setminus D_r}\big\}$ is a basis of $H_1(\R^3 \setminus D_r;\Z)$},
\end{equation}
\begin{equation} \label{eq:4}
\text{$[\widehat{\sigma}_{r,s}]_{\overline{D_r}}=0 \;$ for every $s \in \{1,\ldots,g_r\}$}.
\end{equation}

It follows that
\begin{equation} \label{eq:basis-inside}
%\text{the $1$-cycles $\big\{\sigma_{0,s}\big\}_{s=1}^{g_0} \cup \big\{\widehat{\sigma}_{1,s}\big\}_{s=1}^{g_1} \cup \ldots \cup \big\{\widehat{\sigma}_{p,s}\big\}_{s=1}^{g_p}$ induces a basis of $H_1(\overline{\Omega};\Z)$}
\text{$\big\{[\sigma_{0,s}]_{\overline{\Omega}}\big\}_{s=1}^{g_0} \cup \big\{[\widehat{\sigma}_{1,s}]_{\overline{\Omega}}\big\}_{s=1}^{g_1} \cup \ldots \cup \big\{[\widehat{\sigma}_{p,s}]_{\overline{\Omega}}\big\}_{s=1}^{g_p}$ is a basis of $H_1(\overline{\Omega};\Z)$}
\end{equation}
and
\begin{equation} \label{eq:basis-outside}
\text{$\big\{[\widehat{\sigma}_{0,s}]_{\R^3 \setminus \Omega}\big\}_{j=s}^{g_0} \cup \big\{[\sigma_{1,s}]_{\R^3 \setminus \Omega}\big\}_{s=1}^{g_1} \cup \ldots \cup \big\{[\sigma_{p,s}]_{\R^3 \setminus \Omega}\big\}_{s=1}^{g_p}$ is a basis of $H_1(\R^3 \setminus \Omega;\Z)$,}
\end{equation}
where the homology classes corresponding to $\Gamma_r$ are omitted if $g_r=0$. Since $\R^3 \setminus \Omega$ is equal to the disjoint union $(\R^3 \setminus D_0) \cup \bigcup_{r=1}^p\overline{D_r}$,  \eqref{eq:basis-outside} follows immediately from \eqref{eq:3} with $r=0$ and \eqref{eq:1} with $r \geq 1$. For a proof of \eqref{eq:basis-inside}, we refer the reader to Theorem 3.2.2.1 of \cite{DG98} or to Theorem 6 of \cite{ABGV13}.

The problem is now that we do not know if the $1$-cycles $\big\{\widehat{\sigma}_{0,s}\big\}_{s=1}^{g_0} \cup \big\{\sigma_{1,s}\big\}_{s=1}^{g_1} \cup \ldots \cup \big\{\sigma_{p,s}\}_{s=1}^{g_p}$ of $\T$ are $1$-boundaries of $\T$. Our idea to overcome this difficulty is to replace each $1$-cycle with a $1$-boundary without changing its homology class in $H_1(\R^3 \setminus \Omega;\Z)$.

Let $P:=\{1,\ldots,p\}$, let $\Omega_0:=\R^3 \setminus \bigcup_{i \in P}\overline{D_i}=(\R^3 \setminus D_0) \cup \Omega$ and, for every $r \in P$, let $P_r:=P \setminus \{r\}$ and let $\Omega_r:=D_0 \setminus \bigcup_{i \in P_r}\overline{D_i}=\Omega \cup \overline{D_r}$. Let us observe that
\begin{equation} \label{eq:basis-inside-0}
\text{$\bigcup_{i \in P}\big\{[\widehat{\sigma}_{i,s}]_{\overline{\Omega_0}}\big\}_{s=1}^{g_i} \;$ is a basis of $H_1(\overline{\Omega_0};\Z)$}
\end{equation}
and
\begin{equation} \label{eq:basis-inside-r}
\text{$\big\{[\sigma_{0,s}]_{\overline{\Omega_r}}\big\}_{s=1}^{g_0} \cup \bigcup_{i \in P_r }\big\{[\widehat{\sigma}_{i,s}]_{\overline{\Omega_r}}\big\}_{s=1}^{g_i} \;$ is a basis of $H_1(\overline{\Omega_r};\Z)$}
\end{equation}
for every $r \in P$. Assertion \eqref{eq:basis-inside-r} follows immediately by applying \eqref{eq:basis-inside} with $\Omega$ equal to $\Omega_r$. Let $B$ be an open ball of $\R^3$ containing $\bigcup_{i \in P}\overline{D_i}$, then, by applying \eqref{eq:basis-inside} with $\Omega$ equal to $B^*:=B \setminus \bigcup_{i \in P}\overline{D_i}$, we infer that $\bigcup_{r \in P}\big\{[\widehat{\sigma}_{r,s}]_{\overline{B^*}}\big\}_{s=1}^{g_r}$ is a basis of $H_1(\overline{B^*};\Z)$. Since $\overline{B^*}$ is a strong deformation retract of $\overline{\Omega_0}$, we obtain at once \eqref{eq:basis-inside-0}. Since $\overline{\Omega_r}$ is equal to the disjoint union $\bigcup_{i \in P}\overline{D_i}$ if $r=0$ and $(\R^3 \setminus D_0) \cup \bigcup_{i \in P_r}\overline{D_i}$ if $r \in P$, we have also that
\begin{equation} \label{eq:basis-outside-0}
\text{$\bigcup_{i \in P}\big\{[\sigma_{i,s}]_{\R^3 \setminus \Omega_0}\big\}_{s=1}^{g_i} \;$ is a basis of $H_1(\R^3 \setminus \Omega_0;\Z)$}
\end{equation}
and
\begin{equation} \label{eq:basis-outside-r}
\text{$\big\{[\widehat{\sigma}_{0,s}]_{\R^3 \setminus \Omega_r}\big\}_{s=1}^{g_0} \cup \bigcup_{i \in P_r }\big\{[\sigma_{i,s}]_{\R^3 \setminus \Omega_r}\big\}_{s=1}^{g_i} \;$ is a basis of $H_1(\R^3 \setminus \Omega_r;\Z)$}
\end{equation}
for every $r \in P$.

For every $s \in \{1,\ldots,g_0\}$, the support of $\widehat{\sigma}_{0,s}$ is contained in $\Gamma_0 \subset \overline{\Omega_0}$. In this way, thanks to \eqref{eq:basis-inside-0}, there exist, and are unique, integers $\{\alpha^{0,s}_{i,j}\}_{i,j}$ such that
\begin{equation} \label{eq:0s}
[\widehat{\sigma}_{0,s}]_{\overline{\Omega_0}}=\sum_{i \in P} \sum_{j=1}^{g_i}\alpha^{0,s}_{i,j} \, [\widehat{\sigma}_{i,j}]_{\overline{\Omega_0}} \; .
\end{equation}
Similarly, for every $r \in P$ and for every $s \in \{1,\ldots,g_r\}$, the support of $\sigma_{r,s}$ is contained in $\Gamma_r \subset \overline{\Omega_r}$. In this way, thanks to \eqref{eq:basis-inside-r}, there exist, and are unique, integers $\{\alpha^{r,s}_{i,j}\}_{i,j}$ such that
\begin{equation} \label{eq:rs}
[\sigma_{r,s}]_{\overline{\Omega_r}}=\sum_{j=1}^{g_0}\alpha^{r,s}_{0,j} \, [\sigma_{0,j}]_{\overline{\Omega_r}}+\sum_{i \in P_r} \sum_{j=1}^{g_i}\alpha^{r,s}_{i,j} \, [\widehat{\sigma}_{i,j}]_{\overline{\Omega_r}} \; .
\end{equation}

Define the $1$-cycles $\big\{\widehat{\sigma}'_{0,s}\big\}_{s=1}^{g_0} \cup \big\{\sigma_{1,s}'\big\}_{s=1}^{g_1} \cup \ldots \cup \big\{\sigma_{p,s}'\big\}_{s=1}^{g_p}$ of $\T_\partial$ by setting
\begin{equation} \label{eq:sigma'0s}
\widehat{\sigma}_{0,s}':=\widehat{\sigma}_{0,s}-\sum_{i \in P} \sum_{j=1}^{g_i}\alpha^{0,s}_{i,j} \, \widehat{\sigma}_{i,j}
\end{equation}
for every $s \in \{1,\ldots,g_0\}$, and
\begin{equation} \label{eq:sigma'rs}
\sigma_{r,s}':=\sigma_{r,s}-\sum_{j=1}^{g_0}\alpha^{r,s}_{0,j} \, \sigma_{0,j}-\sum_{i \in P_r} \sum_{j=1}^{g_i}\alpha^{r,s}_{i,j} \, \widehat{\sigma}_{i,j}
\end{equation}
for every $r \in P$ and for every $s \in \{1,\ldots,g_r\}$.

\begin{theo} \label{thm:H2}
The $1$-cycles  %$\big\{\widehat{\sigma}'_{0,s}\big\}_{s=1}^{g_0} \cup \big\{\sigma_{1,s}'\big\}_{s=1}^{g_1} \cup \ldots \cup \big\{\sigma_{p,s}'\big\}_{s=1}^{g_p}$
of $\T_\partial$ defined in \eqref{eq:sigma'0s} and in \eqref{eq:sigma'rs} have the following properties:
\begin{itemize}
 \item[$(1)$] They are $1$-boundaries of $\T$; namely, their homology classes in $\overline{\Omega}$ are null.
 \item[$(2)$] $[\widehat{\sigma}'_{0,s}]_{\R^3 \setminus \Omega}=[\widehat{\sigma}_{0,s}]_{\R^3 \setminus \Omega} \;$ for every $s \in \{1,\ldots,g_0\}$ and $[\sigma_{r,s}']_{\R^3 \setminus \Omega}=[\sigma_{r,s}]_{\R^3 \setminus \Omega} \;$ for every $r \in \{1,\ldots,p\}$ and for every $s \in \{1,\ldots,g_r\}$. In particular, the set
 \[
 \big\{[\widehat{\sigma}'_{0,s}]_{\R^3 \setminus \Omega}\big\}_{s=1}^{g_0} \cup \big\{[\sigma_{1,s}']_{\R^3 \setminus \Omega}\big\}_{s=1}^{g_1} \cup \ldots \cup \big\{[\sigma_{p,s}']_{\R^3 \setminus \Omega}\big\}_{s=1}^{g_p}
 \]
 is a basis of $H_1(\R^3 \setminus \Omega;\Z)$.
 \item[$(3)$] Let $S_{0,s}$ be a homological Seifert surface of $\widehat{\sigma}_{0,s}'$ for every $s \in \{1,\ldots,g_0\}$ and let $S_{r,s}$ be a homological Seifert surface of $\sigma_{r,s}'$ for every $r \in \{1,\ldots,p\}$ and for every $s \in \{1,\ldots,g_r\}$. Then the homology classes of such surfaces $\big\{S_{r,s}\big\}_{r \in \{0,1,\ldots,p\}, s \in \{1,\ldots,g_r\}}$ in $\overline{\Omega}$ modulo $\partial\Omega$ form a basis of $H_2(\overline{\Omega},\partial\Omega;\Z)$.
\end{itemize}
\end{theo}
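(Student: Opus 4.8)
My plan is to treat the three parts in the order (2), (1), (3): statement (2) is elementary, it directly yields the ``basis'' conclusion, and it clarifies the geometry reused in (1); statement (1) is the real work; and (3) is a direct appeal to the Poincar\'e--Lefschetz result recalled in the Introduction.

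\emph{Part (2).} I would exploit that $\R^3\setminus\Omega$ is the \emph{disjoint} union $(\R^3\setminus D_0)\cup\bigcup_{r=1}^p\overline{D_r}$, so that a $1$-cycle supported in one of these pieces bounds in $\R^3\setminus\Omega$ as soon as it bounds in that piece. All correction terms subtracted in \eqref{eq:sigma'0s} and \eqref{eq:sigma'rs} lie in such pieces: each $\widehat\sigma_{i,j}$ with $i\in P$ is supported in $\Gamma_i\subset\overline{D_i}$ and bounds there by \eqref{eq:4}, and each $\sigma_{0,j}$ is supported in $\Gamma_0\subset\R^3\setminus D_0$ and bounds there by \eqref{eq:2}. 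Hence every subtracted cycle is null in $H_1(\R^3\setminus\Omega;\Z)$, giving $[\widehat\sigma'_{0,s}]_{\R^3\setminus\Omega}=[\widehat\sigma_{0,s}]_{\R^3\setminus\Omega}$ and $[\sigma'_{r,s}]_{\R^3\setminus\Omega}=[\sigma_{r,s}]_{\R^3\setminus\Omega}$. Since the right-hand classes are exactly the basis \eqref{eq:basis-outside}, the primed classes are the same set and form a basis.

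\emph{Part (1), the crux.} By construction each primed cycle vanishes in the homology of an \emph{auxiliary} domain: $\widehat\sigma'_{0,s}$ in $H_1(\overline{\Omega_0};\Z)$ by \eqref{eq:0s}, and $\sigma'_{r,s}$ in $H_1(\overline{\Omega_r};\Z)$ by \eqref{eq:rs}. The task is to upgrade this to vanishing in $H_1(\overline{\Omega};\Z)$, where these domains merely \emph{contain} $\overline\Omega$. I would do this in two steps. First, expand the unknown class in the basis \eqref{eq:basis-inside} of $H_1(\overline\Omega;\Z)$, say $[\,\cdot\,]_{\overline\Omega}=\sum_t\beta_t[\sigma_{0,t}]_{\overline\Omega}+\sum_{i\in P}\sum_j\gamma_{i,j}[\widehat\sigma_{i,j}]_{\overline\Omega}$, and push it forward along the inclusion $\overline\Omega\hookrightarrow\overline{\Omega_0}$ (resp. $\overline\Omega\hookrightarrow\overline{\Omega_r}$). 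This map kills precisely the generators that bound in the filled-in piece: the $[\sigma_{0,t}]$ become null in $\overline{\Omega_0}$ since they bound in $\R^3\setminus D_0\subset\overline{\Omega_0}$ (resp. the $[\widehat\sigma_{r,j}]$ become null in $\overline{\Omega_r}$ since they bound in $\overline{D_r}\subset\overline{\Omega_r}$), while it sends the remaining generators to the bases \eqref{eq:basis-inside-0} (resp. \eqref{eq:basis-inside-r}). Comparing with the known vanishing in the auxiliary domain and invoking linear independence forces all but one block of coefficients to vanish, leaving the class in the span of $\{[\sigma_{0,t}]_{\overline\Omega}\}$ (resp. $\{[\widehat\sigma_{r,j}]_{\overline\Omega}\}$). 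Second, I would push forward along a \emph{different} inclusion that detects exactly that residual block: $\overline\Omega\hookrightarrow\overline{D_0}$ for $\widehat\sigma'_{0,s}$ and $\overline\Omega\hookrightarrow\R^3\setminus D_r$ for $\sigma'_{r,s}$ (legitimate since $\overline\Omega\subset\overline{D_0}$ and $\overline\Omega\cap D_r=\emptyset$). A direct computation shows the primed cycle is null in the target, namely $[\widehat\sigma'_{0,s}]_{\overline{D_0}}=0$ by \eqref{eq:4} and $[\sigma'_{r,s}]_{\R^3\setminus D_r}=0$ using \eqref{eq:2} together with the nesting $D_r\subset D_0$ and the pairwise disjointness of the $\overline{D_i}$, while the residual generators now map to the bases \eqref{eq:1} (resp. \eqref{eq:3}). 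Linear independence kills the last block, so the class is zero in $H_1(\overline\Omega;\Z)$; equivalently, the cycle is a $1$-boundary of $\T$. The delicate point is that for each residual subspace one must find an inclusion whose induced map is \emph{injective} on that subspace yet sends the cycle to \emph{zero}; reconciling these opposing requirements is exactly where \eqref{eq:1}--\eqref{eq:4} and the nesting $D_r\subset D_0$ of the cavities are used, and this is the main obstacle.

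\emph{Part (3).} This is immediate from the Poincar\'e--Lefschetz statement recalled in the Introduction. By (1) the $g=\sum_{r=0}^p g_r$ cycles $\widehat\sigma'_{0,s},\sigma'_{r,s}$ are $1$-boundaries of $\T$ with support in $\partial\Omega$, so homological Seifert surfaces $S_{0,s},S_{r,s}$ exist; by (2) their classes form a basis of $H_1(\R^3\setminus\Omega;\Z)$. Applying the quoted duality result to this family yields that the relative classes $\{[S_{r,s}]\}$ form a basis of $H_2(\overline\Omega,\partial\Omega;\Z)$.
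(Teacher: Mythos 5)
Your proposal is correct, and parts (2) and (3) coincide with the paper's own argument (same observation that the correction cycles die in the disjoint pieces of $\R^3\setminus\Omega$, same appeal to Poincar\'e--Lefschetz). Part (1), however, is organized around a genuinely different mechanism. The paper proves $[\widehat{\sigma}'_{0,s}]_{\overline{\Omega}}=0$ by invoking the Mayer--Vietoris sequence of the splitting $\R^3=\overline{D_0}\cup\overline{\Omega_0}$ (resp.\ $\R^3=(\R^3\setminus D_r)\cup\overline{\Omega_r}$), which makes the inclusion-induced map $H_1(\overline{\Omega};\Z)\to H_1(\overline{D_0};\Z)\oplus H_1(\overline{\Omega_0};\Z)$ an isomorphism; it then only has to check the two vanishings $[\widehat{\sigma}'_{0,s}]_{\overline{\Omega_0}}=0$ (from \eqref{eq:0s} and \eqref{eq:sigma'0s}) and $[\widehat{\sigma}'_{0,s}]_{\overline{D_0}}=0$ (from \eqref{eq:4} and $\overline{D_i}\subset\overline{D_0}$). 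You check exactly the same two vanishings, in the same two spaces, but you replace the Mayer--Vietoris input by an expansion of the unknown class in the basis \eqref{eq:basis-inside} of $H_1(\overline{\Omega};\Z)$, followed by a block-by-block elimination of coefficients via the pushforwards $H_1(\overline{\Omega};\Z)\to H_1(\overline{\Omega_0};\Z)$ and $H_1(\overline{\Omega};\Z)\to H_1(\overline{D_0};\Z)$ (resp.\ $\to H_1(\overline{\Omega_r};\Z)$ and $\to H_1(\R^3\setminus D_r;\Z)$), using that the surviving generators land on the bases \eqref{eq:basis-inside-0}, \eqref{eq:basis-inside-r}, \eqref{eq:1}, \eqref{eq:3}. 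Your version trades one topological tool (Mayer--Vietoris, which gives injectivity for free) for more bookkeeping plus reliance on \eqref{eq:basis-inside} being a basis --- a nontrivial fact the paper quotes from the literature but does not actually need in its proof of (1); all the facts you use are nevertheless stated in the paper, so the argument is complete. The paper's route is shorter and self-contained at this point; yours is more elementary in the sense that it only manipulates bases and inclusion maps, and it makes explicit why each auxiliary domain is injective precisely on the residual block of generators.
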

\begin{proof}
$(1)$ Let $s \in \{1,\ldots,g_0\}$. We must prove that $[\widehat{\sigma}_{0,s}']_{\overline{\Omega}}=0$.  Observe that $\R^3=\overline{D_0} \cup \overline{\Omega_0}$ and $\overline{\Omega}=\overline{D_0} \cap \overline{\Omega_0}$. In this way, the Mayer-Vietoris exact sequence associated with the splitting $\R^3=\overline{D_0} \cup \overline{\Omega_0}$ implies that the following inclusion homomorphism is an isomorphism:
\[
i_* \oplus j_*:H_1(\overline{\Omega};\Z) \lra H_1(\overline{D_0};\Z) \oplus H_1(\overline{\Omega_0};\Z),
\]
where $i_*$ and $j_*$ are the homomorphisms induced by the inclusions $i:\overline{D_0} \hookrightarrow \R^3$ and $j:\overline{\Omega_0} \hookrightarrow \R^3$, respectively. It follows that $[\widehat{\sigma}_{0,s}']_{\overline{\Omega}}=0$ if and only if $[\widehat{\sigma}_{0,s}']_{\overline{D_0}}=i_*([\widehat{\sigma}_{0,s}']_{\overline{\Omega}})=0$ and $[\widehat{\sigma}_{0,s}']_{\overline{\Omega_0}}=j_*([\widehat{\sigma}_{0,s}']_{\overline{\Omega}})=0$. By \eqref{eq:0s} and \eqref{eq:sigma'0s}, we have that $[\widehat{\sigma}_{0,s}']_{\overline{\Omega_0}}=0$. Since $\overline{D_r} \subset \overline{D_0}$ for every $r \in P$, equality \eqref{eq:4} ensures that $[\widehat{\sigma}_{i,j}]_{\overline{D_0}}=0$ for every $i \in P$ and for every $j \in \{1,\ldots,g_i\}$. In this way, by \eqref{eq:sigma'0s}, we infer that $[\widehat{\sigma}'_{0s}]_{\overline{D_0}}=0$. This proves that $[\widehat{\sigma}'_{0,s}]_{\overline{\Omega}}=0$, as desired.

For any given $r \in P$ and $s \in \{1,\ldots,g_r\}$, the proof of the fact that  $[\sigma'_{r,s}]_{\overline{\Omega}}=0$ is similar. One must consider the Mayer-Vietoris sequence associated with splitting $\R^3=(\R^3 \setminus D_r) \cup \overline{\Omega_r}$, points \eqref{eq:rs} and \eqref{eq:sigma'rs}, and the inclusions $\R^3 \setminus D_0 \subset \R^3 \setminus D_r$ and $\overline{D_i} \subset \R^3\setminus D_r$ for every $i \in P_r$, together with equalities \eqref{eq:2} and \eqref{eq:4}.

$(2)$ Since $\R^3 \setminus D_0 \subset \R^3 \setminus \Omega$ and $\overline{D_i} \subset \R^3 \setminus \Omega$ for every $i \in P$, equalities \eqref{eq:2} and \eqref{eq:4} imply that $[\sigma_{0,j}]_{\R^3 \setminus \Omega}=0$ for every $j \in \{1,\ldots,g_0\}$ and $[\widehat{\sigma}_{i,j}]_{\R^3 \setminus \Omega}=0$ for every $i \in P$ and for every $j \in \{1,\ldots,g_i\}$. By \eqref{eq:sigma'0s} and \eqref{eq:sigma'rs}, we have that $[\widehat{\sigma}'_{0,s}]_{\R^3 \setminus \Omega}=[\widehat{\sigma}_{0,s}]_{\R^3 \setminus \Omega}$ for every $s \in \{1,\ldots,g_0\}$ and $[\sigma_{r,s}']_{\R^3 \setminus \Omega}=[\sigma_{r,s}]_{\R^3 \setminus \Omega}$ for every $r \in P$ and for every $s \in \{1,\ldots,g_r\}$. This proves the first part of $(2)$. The second part of $(2)$ now follows immediately from \eqref{eq:basis-outside}.

$(3)$ The existence of the homological Seifert surfaces $S_{r,s}$ is equivalent to $(1)$. Point $(3)$ is a direct consequence of the second part of $(2)$ and of the Poincar\'e-Lefschetz duality theorem.
\end{proof}

We conclude this section by computing the coefficients $\alpha^{r,s}_{i,j}$.
To do it we need to recall the notion of linking number and some properties that will be usefull in the sequel. See, e.g., Rolfsen~\cite[pp.\ 132--136]{ROL76}, Seifert and Threlfall~\cite[Sects.\ 70, 73, 77]{ST80}. The linking number is an integer that, given two  $1$-cycles $\gamma$ and $\eta$ of $\R^3$ with disjoint supports; namely, $|\gamma| \cap |\eta|=\emptyset$, represents the number of times that each curve winds around the other. A possible geometric way to give a rigorous definition is as follows.
Choose a homological Seifert surface $S_\eta=\sum_{q=1}^kb_qf_q$ of $\eta$ in $\R^3$. It is well-known (and easy to see) that there exists a $1$-cycle $\widehat{\gamma}=\sum_{p=1}^h\widehat{a}_p\widehat{e}_p$ homologous to $\gamma$ in $\R^3 \setminus |\eta|$ (and ``arbitrarily close to $\gamma$'' if necessary), which is transverse to $S_\eta$ in the following sense: for every $p \in \{1,\ldots,h\}$ and for every $q \in \{1,\ldots,k\}$, the intersection $|\widehat{e}_p| \cap |f_q|$ is either empty or consists of a single point, which does not belong to $|\partial_1 \widehat{e}_p| \cup |\partial_2 f_q|$.

For every $p \in \{1,\ldots,h\}$ and for every $q \in \{1,\ldots,k\}$, define $L_{pq}:=0$ if $|\widehat{e}_p| \cap |f_q|=\emptyset$ and $L_{pq}:=\sign(\bs{\tau}(\widehat{e}_p) \cdot \bs{\nu}(f_q))$ otherwise. The linking number $\lk(\gamma,\eta)$ between $\gamma$ and $\eta$ is the integer defined as follows:
\begin{equation} \label{eq:def-lk}
\lk(\gamma,\eta):=\sum_{p=1}^h\sum_{q=1}^k\widehat{a}_pb_qL_{pq}.
\end{equation}
This definition is well-posed: it depends only on $\gamma$ and $\eta$, not on the choice of $S_\eta$ and of $\widehat{\gamma}$. %The reader observes that the preceding construction fully justifies the usual heuristic description of the linking number between $\gamma$ and $\eta$ as the number of times that $\gamma$ winds around~$\eta$.

The linking number is symmetric $\lk(\gamma,\eta)=\lk(\eta,\gamma)$, and bilinear
$\lk(a\gamma,\eta)=a \, \lk(\gamma,\eta) \; \text{ for every }a \in \Z$
and,  if $\gamma^* \in Z_1(\R^3;\Z)$ with $|\gamma^*| \cap |\eta|=\emptyset$,
$\lk(\gamma+\gamma^*,\eta)=\lk(\gamma,\eta)+\lk(\gamma^*,\eta) $.

The linking number is a homological invariant in the following sense: if a $1$-cycle $\gamma^*$ of $\R^3$ is homologous to $\gamma$ in $\R^3 \setminus |\eta|$, then
\begin{equation} \label{eq:homol-inv}
\lk(\gamma,\eta)=\lk(\gamma^*,\eta).
\end{equation}
In particular, we have:
\begin{equation} \label{eq:bounds}
\text{$\lk(\gamma,\eta)=0$ if $\gamma$ bounds in $\R^3 \setminus |\eta|$.}
\end{equation}

The linking number can be used to recognize $1$-boundaries of $\T$ among $1$-cycles of $\T$. This is possible by the Alexander duality theorem. Indeed, such a theorem ensures that $H_1(\R^3 \setminus \overline{\Omega};\Z)$ is isomorphic to $H_1(\overline{\Omega};\Z)$, and hence to $\Z^g$ if $g$ is the first Betti number of $\overline{\Omega}$. Furthermore, if $\sigma^*_1,\ldots,\sigma^*_g$ are $1$-cycles of $\R^3$ with support in $\R^3 \setminus \overline{\Omega}$ whose homology classes in $\R^3 \setminus \overline{\Omega}$ form a basis of $H_1(\R^3 \setminus \overline{\Omega};\Z)$, then it holds:
\begin{equation*} %\label{eq:alexander}
\text{a $1$-cycle $\sigma$ of $\T$ is a $1$-boundary of $\T$ if and only if $\lk(\sigma,\sigma^*_i)=0$ for every $i \in \{1,\ldots,g\}$.}
\end{equation*}
For this topic, we refer the reader to \cite{CdeTG02} and to the references mentioned therein.

The linking number can be computed  via a double integral:
\begin{equation}
    \lk (\gamma,\eta) =
    \frac{1}{4 \pi} \oint_\gamma \left( \oint_{\eta}
    \frac{{\bf y}-{\bf x}}{|{\bf y}-{\bf x}|^3}
    \times {\rm d}{\bf s}({\bf y})\right) \cdot {\rm d}{\bf s}({\bf x})\,.
  \end{equation}
For an efficient computation of the linking number see e.g. \cite{Arai13}.

\vspace{.5em}

%\noindent \textbf{Recognition of 1-boundaries.}

For the computation of the coefficients  $\alpha^{r,s}_{i,j}$ we will use also the fact that since
$\partial\Omega$ has a collar in $\R^3 \setminus \Omega$, there exist $1$-cycles $\big\{\widehat{\sigma}_{0,s}^-\big\}_{s=0}^{g_0} \cup \big\{\sigma_{1,s}^-\big\}_{s=1}^{g_1} \cup \ldots \cup \big\{\sigma_{p,s}^-\big\}_{s=1}^{g_p}$ of $\R^3$ with support contained in $\R^3 \setminus \overline{\Omega}$ (obtained by slightly retracting the $1$-cycles $\big\{\widehat{\sigma}_{0,s}\big\}_{s=0}^{g_0} \cup \big\{\sigma_{1,s}\big\}_{s=1}^{g_1} \cup \ldots \cup \big\{\sigma_{p,s}\big\}_{s=1}^{g_p}$ of $\T_\partial$ inside $\R^3 \setminus \overline{\Omega} \;$) such that $[\widehat{\sigma}^-_{0,s}]_{\R^3 \setminus \Omega}=[\widehat{\sigma}_{0,s}]_{\R^3 \setminus \Omega} \;$ for every $s \in \{1,\ldots,g_0\}$ and $[\sigma_{r,s}^-]_{\R^3 \setminus \Omega}=[\sigma_{r,s}]_{\R^3 \setminus \Omega}$ for every $r \in P$ and for every $s \in \{1,\ldots,g_r\}$. In particular, thanks to \eqref{eq:basis-outside-0} and \eqref{eq:basis-outside-r}, we infer that
\begin{equation} \label{eq:basis-outside-0-bis}
\text{$\bigcup_{i \in P}\big\{[\sigma_{i,s}^-]_{\R^3 \setminus \overline{\Omega_0}}\big\}_{s=1}^{g_i} \;$ is a basis of $H_1(\R^3 \setminus \overline{\Omega_0};\Z)$}
\end{equation}
and
\begin{equation} \label{eq:basis-outside-r-bis}
\text{$\big\{[\widehat{\sigma}_{0,s}^-]_{\R^3 \setminus \overline{\Omega_r}}\big\}_{s=1}^{g_0} \cup \bigcup_{i \in P_r }\big\{[\sigma_{i,s}^-]_{\R^3 \setminus \overline{\Omega_r}}\big\}_{s=1}^{g_i} \;$ is a basis of $H_1(\R^3 \setminus \overline{\Omega_r};\Z)$}
\end{equation}
for every $r \in P$.

%\noindent \textbf{Linking number.}

\vspace{.3cm}

For every $k,i \in \{0,1,\ldots,p\}$, define the $(g_k \times g_i)$-matrix $A_{k,i}$ as follows:
\begin{align*}
&A_{0,0}:= \big(\lk(\widehat{\sigma}_{0,l}^-,\sigma_{0,j})\big)_{l,j} \in \Z^{g_0 \times g_0},\\
&A_{0,i}:=\big(\lk(\widehat{\sigma}_{0,l}^-,\widehat{\sigma}_{i,j})\big)_{l,j} \in \Z^{g_0 \times g_i} \; \text{ if $i \in P$,}\\
&A_{k,0}:=\big(\lk(\sigma_{k,l}^-,\sigma_{0,j})\big)_{l,j} \in \Z^{g_k \times g_0} \; \text{ if $k \in P$,}\\
&A_{k,i}:=\big(\lk(\sigma_{k,l}^-,\widehat{\sigma}_{i,j})\big)_{l,j} \in \Z^{g_k \times g_i} \; \text{ if $k,i \in P$ and $k \ne i$,} \\
&A_{k,k}:=\big(\lk(\sigma_{k,l}^-,\widehat{\sigma}_{k,j})\big)_{l,j} \in \Z^{g_k \times g_i} \; \text{ if $k \in P$.}
\end{align*}

\begin{lemma}
For every $k,i \in \{1,\dots, p \}$ the matrices $A_{0,i}$ and $A_{k,0}$ are equal to zero, and if $k \ne i$  then also the matrix $A_{k,i}$ is equal to zero.
%$A_{k,i}:=\big(\lk(\sigma_{k,l}^-,\widehat{\sigma}_{i,j})\big)_{l,j} =\big(\lk(\sigma_{k,l},\widehat{\sigma}_{i,j})\big)_{l,j} =0$ if
\end{lemma}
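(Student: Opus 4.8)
The plan is to observe that every entry of every block $A_{k,i}$ is a linking number $\lk(\gamma,\eta)$ of two $1$-cycles of $\R^3$ with disjoint supports, and to prove that in the stated cases one of $\gamma,\eta$ bounds in $\R^3$ within a region disjoint from the support of the other; the vanishing $\lk(\gamma,\eta)=0$ then follows from \eqref{eq:bounds} together with the symmetry of the linking number. The only preliminary work is some topological bookkeeping on the sets $D_r$. Since $\R^3 \setminus \Omega$ is, as recalled above, the disjoint union of the closed sets $\R^3 \setminus D_0$ and $\overline{D_r}$ ($r \in P$), I would first record that $\overline{D_r} \subset D_0$ for every $r \in P$ and that $\overline{D_r} \cap \overline{D_{r'}} = \emptyset$ for distinct $r,r' \in P$; dually, $\R^3 \setminus \overline{\Omega}$ is the disjoint union of the open sets $\R^3 \setminus \overline{D_0}$ and $D_r$ ($r \in P$). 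Because $\widehat{\sigma}_{0,s}^-$ is obtained by pushing $\widehat{\sigma}_{0,s}$ (supported on $\Gamma_0$) slightly off $\partial\Omega$ into $\R^3 \setminus \overline{\Omega}$, and $\Gamma_0$ meets the closure of only the component $\R^3 \setminus \overline{D_0}$, I may take $|\widehat{\sigma}_{0,s}^-| \subset \R^3 \setminus \overline{D_0}$; likewise, for $k \in P$ I may take $|\sigma_{k,s}^-| \subset D_k$.

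Then I would treat the three cases, all by the same pattern, using that $\sigma_{0,j}$ bounds in $\R^3 \setminus D_0$ by \eqref{eq:2} and that $\widehat{\sigma}_{i,j}$ bounds in $\overline{D_i}$ by \eqref{eq:4}. For the block $A_{0,i}$ with $i \in P$, whose entries are $\lk(\widehat{\sigma}_{0,l}^-,\widehat{\sigma}_{i,j})$: the supports are disjoint since $|\widehat{\sigma}_{0,l}^-| \subset \R^3 \setminus \overline{D_0}$ while $|\widehat{\sigma}_{i,j}| \subset \Gamma_i \subset \overline{D_i} \subset D_0$, and $\widehat{\sigma}_{i,j}$ bounds inside $\overline{D_i} \subset \overline{D_0}$, which is disjoint from $|\widehat{\sigma}_{0,l}^-|$. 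For the block $A_{k,0}$ with $k \in P$, whose entries are $\lk(\sigma_{k,l}^-,\sigma_{0,j})$: the supports are disjoint since $|\sigma_{k,l}^-| \subset D_k \subset D_0$ while $|\sigma_{0,j}| \subset \Gamma_0$, and $\sigma_{0,j}$ bounds inside $\R^3 \setminus D_0$, which is disjoint from $|\sigma_{k,l}^-|$. For the block $A_{k,i}$ with $k \ne i$ in $P$, whose entries are $\lk(\sigma_{k,l}^-,\widehat{\sigma}_{i,j})$: the supports are disjoint since $|\sigma_{k,l}^-| \subset D_k$ while $|\widehat{\sigma}_{i,j}| \subset \Gamma_i \subset \overline{D_i}$ and $\overline{D_k} \cap \overline{D_i} = \emptyset$, and $\widehat{\sigma}_{i,j}$ bounds inside $\overline{D_i}$, which is disjoint from $|\sigma_{k,l}^-|$. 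In each case \eqref{eq:bounds} and the symmetry of $\lk$ force the entry to be $0$, so the whole block is the zero matrix.

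I do not expect a serious obstacle. The two points that need care are purely set-theoretic: first, checking in each case that the two supports are genuinely disjoint, so that the linking number is defined at all; and second, identifying the precise connected component of $\R^3 \setminus \overline{\Omega}$ in which each retracted cycle $\widehat{\sigma}_{0,s}^-$ or $\sigma_{k,s}^-$ lies. It is exactly this second point that makes the retracted cycles indispensable here: the original $\sigma_{k,s}$, lying on $\Gamma_k \subset D_0$, would not allow us to conclude that $\sigma_{0,j}$ (which only bounds in $\R^3 \setminus D_0$) bounds away from it. This is also why the argument does not — and should not — apply to the diagonal blocks $A_{0,0}$ and $A_{k,k}$: there the region in which the first cycle bounds ($\overline{D_0}$, respectively $\overline{D_k}$) does contain the support of the second cycle.
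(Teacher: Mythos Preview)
Your proof is correct and follows essentially the same idea as the paper's: in each case one of the two cycles bounds in a region disjoint from the support of the other, so \eqref{eq:bounds} (plus symmetry) forces the linking number to vanish. The only difference is cosmetic: the paper first replaces the retracted cycles $\widehat{\sigma}_{0,l}^-$, $\sigma_{k,l}^-$ by the original cycles $\widehat{\sigma}_{0,l}$, $\sigma_{k,l}$ via homological invariance \eqref{eq:homol-inv}, and then argues disjointness using $\Gamma_0 \cap \overline{D_i} = \emptyset$ and $\Gamma_k \cap (\R^3 \setminus D_0) = \emptyset$ directly. In particular, your closing remark that the retracted cycles are ``indispensable'' for the $A_{k,0}$ case is not quite right: since $\Gamma_k \subset \overline{D_k} \subset D_0$, the region $\R^3 \setminus D_0$ in which $\sigma_{0,j}$ bounds is already disjoint from $|\sigma_{k,l}|$ itself, not just from $|\sigma_{k,l}^-|$.
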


\begin{proof}
First we notice that for any $l \in \{1,\dots,g_k\}$ the support of the 1-cycle $\sigma_{k,l}$ is contained in $\Gamma_k$ while
for any $j \in \{1,\dots,g_l\}$ the support of the 1-cycle $\widehat{\sigma}_{i,j}$ is contained in $\Gamma_l$. Hence, if $k \ne l$ then  the 1-cycles $\sigma_{k,l}$ and $\widehat{\sigma}_{i,j}$ are disjoint and $\lk(\sigma_{k,l},\widehat{\sigma}_{i,j})=\lk(\widehat\sigma_{i,j},{\sigma}_{k,l})$ is well defined.  Moreover $\lk(\widehat{\sigma}_{0,l}^-,\sigma_{0,j}) = \lk(\widehat{\sigma}_{0,l},\sigma_{0,j})$ and
$\lk(\sigma_{k,l}^-,\widehat{\sigma}_{i,j})=\lk(\sigma_{k,l},\widehat{\sigma}_{i,j})$.

Now it is not difficult to see that
$A_{0,i}=0$ if $i \in P$ because for any $j \in \{1,\dots,g_i \}$ we have $\widehat{\sigma}_{i,j} = \partial_2 S_{i,j} \subset \overline{D_i}$ while for  any $l \in \{1,\dots,g_0\}$, $|\widehat{\sigma}_{0,l}| \subset \Gamma_0$. Since $\Gamma_0 \cap \overline{D_i} = \emptyset$ if $i \in P$, then $ A_{0,i}=\lk(\widehat{\sigma}_{0,l},\sigma_{i,j})=0$.
Analogously
$A_{k,0} =0$ if $k \in P$ because for any $j \in \{1,\dots,g_0\}$, $\sigma_{0,j} = \partial_2 S_{0,j} \subset \R^3 \setminus D_0$ and for any $l \in \{1,\dots,g_l\}$, $|\sigma_{k,l}| \subset \Gamma_k$. Again we have $\Gamma_k \cap \R^3 \setminus D_0 = \emptyset$ if $k \in P$ and then $A_{k,0}=\lk(\sigma_{k,l}^-,\sigma_{0,j})=0$.
Finally
$A_{k,i}=0$ if $k,i \in P$ and $k \ne i$ because for any $j \in \{1,\dots, g_i\}$,  $ \widehat{\sigma}_{i,j} = \partial_2 S_{i,j} \subset \overline{D_i}$, for any $l \in \{1,\dots,g_k\}$, $|\sigma_{k,l}| \subset \Gamma_k$ and $\Gamma_k \cap \overline{D_i}= \emptyset$ if $k \ne i$.

\end{proof}

%(When computing the linking number between two cycles supported  on different conneted component of the boundary it is not necessary to use the rectraction because they are disjoint.)

\subsubsection*{Computation of the coefficients $(\alpha^{0,s}_{i,j})_{i,j}$ for $s \in \{1,\ldots,g_0\}$}

%\textit{Choose $s \in \{1,\ldots,g_0\}$ and compute the coefficients $(\alpha^{0,s}_{i,j})_{i,j}$.}%{i \in P,j \in \{1,\ldots,g_i\}}$.}

Let $G_0:=\sum_{i \in P}g_i=g-g_0$ and let $A_{(0)}$ be the diagonal block matrix with blocks $(A_{k,k})_{k \in P} \in \Z^{G_0 \times G_0}$. It is important to observe that the entries of $A_{(0)}$ are the linking numbers between the representatives of a basis of $H_1(\R^3 \setminus \overline{\Omega_0};\Z)$ (see \eqref{eq:basis-outside-0-bis}) and the representatives of a basis of $H_1(\overline{\Omega_0};\Z)$ (see \eqref{eq:basis-inside-0}). In this way, the Alexander duality theorem applied to $\overline{\Omega}$ ensures that
\begin{equation} \label{eq:det0s}
\big|\det\big(A_{(0)}\big)\big|=1.
\end{equation}

Define the row vectors $\alpha^{0,s}_i:=(\alpha^{0,s}_{i,1},\ldots,\alpha^{0,s}_{i,g_i})$ and
%$\beta^{0,s}_i:=\big(\lk(\sigma_{i,1}^-,\widehat{\sigma}_{0,s}),\ldots,\lk(\sigma_{i,g_i}^-,\widehat{\sigma}_{0,s})\big)$ 
$\beta^{0,s}_i:=\big(\lk(\sigma_{i,1},\widehat{\sigma}_{0,s}),\ldots,\lk(\sigma_{i,g_i},\widehat{\sigma}_{0,s})\big)$ for every $i \in P$, and the column vectors
\[
\alpha^{0,s}:=(\alpha^{0,s}_1,\ldots,\alpha^{0,s}_p)^T \in \Z^{G_0}
\quad \mbox{and} \quad
\beta^{0,s}:=(\beta^{0,s}_1,\ldots,\beta^{0,s}_p)^T \in \Z^{G_0},
\]
where the superscript ``$\, ^T \,$'' denotes the transpose operation.

Bearing in mind the linearity of linking number and its homological invariance, equation \eqref{eq:0s} implies that
\begin{equation} \label{eq:0s-}
%\text{$\lk(\sigma_{k,h}^-,\widehat{\sigma}_{0,s})=\sum_{i \in P}\sum_{j=1}^{g_i}\alpha^{0,s}_{i,j} \lk(\sigma_{k,h}^-,\widehat{\sigma}_{i,j}) \;$ if $k \in P$ and $h \in \{1,\ldots,g_k\}$.}
\text{$\lk(\sigma_{k,h},\widehat{\sigma}_{0,s})=\sum_{i \in P}\sum_{j=1}^{g_i}\alpha^{0,s}_{i,j} \lk(\sigma_{k,h}^-,\widehat{\sigma}_{i,j}) \;$ if $k \in P$ and $h \in \{1,\ldots,g_k\}$.}
\end{equation}
Linear system \eqref{eq:0s-} in the unknowns $(\alpha^{0,s}_{i,j})_{i,j}$ can be rewritten in the following compact form:
\begin{equation} \label{eq:0s-compact}
A_{(0)}\alpha^{0,s}=\beta^{0,s},
\end{equation}
where $\alpha^{0,s}$ is the unknown. Thanks to \eqref{eq:det0s}, equation \eqref{eq:0s} is equivalent to \eqref{eq:0s-compact}.

In this way, we conclude that the coefficients $(\alpha^{0,s}_{i,j})_{i,j}$ can be computed by solving linear system \eqref{eq:0s-compact}, namely solving $p$ linear systems each one of dimension $g_r$, $r=1,\dots,p$.

\subsubsection*{Computation of the coefficients $(\alpha^{r,s}_{i,j})_{i,j}$ for $r \in P$ and $s\in\{1,\dots,g_r\}$.}

%\textit{Choose $r \in P$ and $s \in \{1,\ldots,g_r\}$. It remains to compute the coefficients $(\alpha^{r,s}_{i,j})_{i,j}$.} %$(\alpha^{r,s}_{0,j})_j$ %{j \in \{1,\ldots,g_0\}}$ and $(\alpha^{r,s}_{i,j})_{i,j}$.}% \in P_r,j \in \{1,\ldots,g_i\}}$.} Let us proceed as above.

Given $k \in P$, we define the integer $k_r \in \{0,1,\ldots,p\} \setminus \{r\}$ by setting $k_r:=k-1$ if $k \leq r$ and $k_r:=k$ if $k>r$. Let $G_r:=\sum_{i \in P_r}g_i=g-g_r$ and let $A_{(r)}$ be the diagonal block matrix $(A_{k_r,i_r})_{k,i \in P} \in \Z^{G_r \times G_r}$. By applying the Alexander duality theorem to $\overline{\Omega_r}$ (see \eqref{eq:basis-outside-r-bis} and \eqref{eq:basis-inside-r}), we obtain:
\begin{equation} \label{eq:detrs}
\big|\det\big(A_{(r)}\big)\big|=1.
\end{equation}

Define the row vectors $\alpha^{r,s}_0:=(\alpha^{r,s}_{0,1},\ldots,\alpha^{r,s}_{0,g_0})$, %$\beta^{r,s}_0:=\big(\lk(\widehat{\sigma}_{0,1}^-,\sigma_{r,s}),\ldots,\lk(\widehat{\sigma}_{0,g_0}^-,\sigma_{r,s}\big)$ 
$\beta^{r,s}_0:=\big(\lk(\widehat{\sigma}_{0,1},\sigma_{r,s}),\ldots,\lk(\widehat{\sigma}_{0,g_0},\sigma_{r,s}\big)$
and, for every $i \in P_r$, $\alpha^{r,s}_i:=(\alpha^{r,s}_{i,1},\ldots,\alpha^{r,s}_{i,g_i})$ and %$\beta^{r,s}_i:=\big(\lk(\sigma_{i,1}^-,\sigma_{r,s}),\ldots,\lk(\sigma_{i,g_i}^-,\sigma_{r,s})\big)$. 
$\beta^{r,s}_i:=\big(\lk(\sigma_{i,1},\sigma_{r,s}),\ldots,\lk(\sigma_{i,g_i},\sigma_{r,s})\big)$.
Define also the column vectors
\[
\alpha^{r,s}:=(\alpha^{r,s}_0,\alpha^{r,s}_1,\ldots,\alpha^{r,s}_{r-1},\alpha^{r,s}_{r+1},\ldots,\alpha^{r,s}_p)^T \in \Z^{G_r}
\]
and
\[
\beta^{r,s}:=(\beta^{r,s}_0,\beta^{r,s}_1,\ldots,\beta^{r,s}_{r-1},\beta^{r,s}_{r+1},\ldots,\beta^{r,s}_p)^T \in \Z^{G_r}
\]

By using equation \eqref{eq:rs} and the linking number, we infer that
\begin{equation} \label{eq:rs0-}
%\lk(\widehat{\sigma}_{0,h}^-,\sigma_{r,s})=\sum_{j=1}^{g_0}\alpha^{r,s}_{0,j}\lk(\widehat{\sigma}_{0,h}^-,\sigma_{0,j})+\sum_{i \in P_r}\sum_{j=1}^{g_i}\alpha^{r,s}_{i,j}\lk(\widehat{\sigma}_{0,h}^-,\widehat{\sigma}_{i,j})
\lk(\widehat{\sigma}_{0,h},\sigma_{r,s})=\sum_{j=1}^{g_0}\alpha^{r,s}_{0,j}\lk(\widehat{\sigma}_{0,h}^-,\sigma_{0,j})+\sum_{i \in P_r}\sum_{j=1}^{g_i}\alpha^{r,s}_{i,j}\lk(\widehat{\sigma}_{0,h}^-,\widehat{\sigma}_{i,j})
\end{equation}
if $h \in \{1,\ldots,g_0\}$ and
\begin{equation} \label{eq:rs-}
%\lk(\sigma_{k,h}^-,\sigma_{r,s})=\sum_{j=1}^{g_0}\alpha^{r,s}_{0,j}\lk(\sigma_{k,h}^-,\sigma_{0,j})+\sum_{i \in P_r}\sum_{j=1}^{g_i}\alpha^{r,s}_{i,j}\lk(\sigma_{k,h}^-,\widehat{\sigma}_{i,j})
\lk(\sigma_{k,h},\sigma_{r,s})=\sum_{j=1}^{g_0}\alpha^{r,s}_{0,j}\lk(\sigma_{k,h}^-,\sigma_{0,j})+\sum_{i \in P_r}\sum_{j=1}^{g_i}\alpha^{r,s}_{i,j}\lk(\sigma_{k,h}^-,\widehat{\sigma}_{i,j})
\end{equation}
if $k \in P_r$ and $h \in \{1,\ldots,g_k\}$. Equations \eqref{eq:rs0-} and \eqref{eq:rs-} can be rewritten as follows:
\begin{equation} \label{eq:rs-compact}
A_{(r)}\alpha^{r,s}=\beta^{r,s}.
\end{equation}
Also in this case, for each $r \in P$ matrix $A_{(r)}$ is block diagonal. Thanks to \eqref{eq:detrs}, equation \eqref{eq:rs} and linear system \eqref{eq:rs-compact} are equivalent. Once again, we conclude that the coefficients $(\alpha^{r,s}_{i,j})_{i,j}$ can be computed by resolving linear system \eqref{eq:rs-compact}.

\section{Homological issues for implementation}

Given two different points ${\bf a},{\bf b}$ in $\R^3$, we denote by $[{\bf a}, {\bf b}]$ the oriented segment of $\R^3$ from ${\bf a}$ to ${\bf b}$. The segment of $\R^3$ of vertices $\mb{a}$, $\mb{b}$ is called support of $[\mb{a},\mb{b}]$ and it is denoted by $|[\mb{a},\mb{b}]|$. The unit tangent vector $\bs{\tau}([\mb{a},\mb{b}])$ of the oriented segment $[{\bf a}, {\bf b}]$ is given by  $\bs{\tau}([{\bf a}, {\bf b}]):=\frac{{\bf b} - {\bf a}}{| {\bf b} - {\bf a} |}$. The barycenter of $e=[{\bf a}, {\bf b}]$ is the point of $\R^3$, $B(e)=({\bf a} + {\bf b})/2$.
A (piecewise linear) $1$-chain of $\R^3$ is a finite formal linear combination $\sum_{i=1}^m \alpha_i e_i$ of oriented segments $e_i=[{\bf a}_i, {\bf b}_i]$ of $\R^3$ with integer coefficients $\alpha_i$. We denote by $C_1(\R^3, \Z)$ the abelian group of $1$-chains in $\R^3$.

Analogously, if ${\bf a}$, ${\bf b}$, ${\bf c}$ are three different not aligned points in $\R^3$, we denote by $[{\bf a}, {\bf b}, {\bf c}]$ the oriented triangle of $\R^3$. The triangle of $\R^3$ of vertices $\mb{a},\mb{b},\mb{c}$ is called support of $[\mb{a},\mb{b},\mb{c}]$ and it is denoted by  $|[\mb{a},\mb{b},\mb{c}]|$. The unit normal vector $\bs{\nu}([\mb{a},\mb{b},\mb{c}])$ of the oriented triangle $[{\bf a}, {\bf b}, {\bf c}]$ is obtained by the right hand rule: $\bs{\nu}([{\bf a}, {\bf b}, {\bf c}]):=\frac{ ({\bf b} - {\bf a}) \times ({\bf c} - {\bf a})}{| ({\bf b} - {\bf a}) \times ({\bf c} - {\bf a})|}$. The  barycenter of $f=[{\bf a}, {\bf b}, {\bf c}]$ is the point of $\R^3$, $B(f)=({\bf a} + {\bf b} + {\bf c})/3$. A (piecewise linear) 2-chain of $\R^3$ is a finite formal linear combination $\sum_{i=1}^p \beta_i f_i$ of oriented triangles $f_i=[{\bf a}_i, {\bf b}_i, {\bf c}_i]$ of $\R^3$ with integer coefficients $\beta_i$. We denote by $C_2(\R^3, \Z)$ the abelian group of 2-chains in~$\R^3$.

Finally, if ${\bf a}$, ${\bf b}$, ${\bf c}$, ${\bf d}$ are four different not coplanar points in $\R^3$, we denote by $[{\bf a}, {\bf b}, {\bf c}, {\bf d}]$ the oriented tetrahedron of $\R^3$.
%; namely, the tetrahedron $\{t\mb{a}+s\mb{b}+u\mb{c}+v\mb{d} \in \R^3 \, | \, t,s,u,v \geq 0, t+s+u+v=1\}$ of $\R^3$ of vertices $\mb{a}$, $\mb{b}$, $\mb{c}$, $\mb{d}$, together with the ordering $(\mb{a},\mb{b},\mb{c},\mb{d})$ of its vertices.
The tetrahedron of $\R^3$ of vertices $\mb{a},\mb{b},\mb{c},\mb{d}$ is called support of the oriented tetrahedron $[\mb{a},\mb{b},\mb{c},\mb{d}]$ and it is denoted by $|[\mb{a},\mb{b},\mb{c},\mb{d}]|$. The  barycenter of $t=[{\bf a}, {\bf b}, {\bf c}, {\bf d}]$ is the point of $\R^3$, $B(t)=({\bf a} + {\bf b} + {\bf c}+{\bf d})/4$. A (piecewise linear) 3-chain of $\R^3$ is a finite formal linear combination $\sum_{i=1}^q d_i t_i$ of oriented tetrahedra $t_i=[{\bf a}_i, {\bf b}_i, {\bf c}_i, {\bf d}_i]$ of $\R^3$ with integer coefficients $d_i$. We denote by $C_3(\R^3, \Z)$ the abelian group of 3-chains~in~$\R^3$.

We indicate by $\E$, $\F$ and $\K$ the sets of oriented edges, oriented faces and oriented tetrahedra of $\T$, respectively.

Let us recall the definitions of dual vertices, dual edges and dual faces of $\T$. We equip the dual edges and the dual faces with the natural orientation induced by the right hand rule.
\begin{itemize}
 \item
 For every tetrahedron $t \in K$, the dual vertex $D(t)$ of $\T$ associated with $t$ is defined as the barycenter of $t$:
 $ D(t):=B(t)$.

We denote by $V'$ the set $\{D(t) \in \R^3 \, | \, t \in K\}$ of all dual vertices of $\T$.

\item
For every oriented face $f=[\vv,\mb{w},\mb{y}] \in \F$, the oriented dual edge $D(f)$ of $\T$ associated with $f$ is the element of $C_1(\R^3;\Z)$ defined as follows: if $K(f)$ denotes the set $\big\{t \in K \, \big| \, \{\vv,\mb{w},\mb{y}\} \subset t\big\}$; namely, the set of tetrahedra of $\T$ incident on $f$, we set
\[
 D(f):=\sum_{t \in K(f)} \sign \big(\bs{\nu}(f) \cdot \bs{\tau}([B(f),B(t)]) \big) \, [B(f),B(t)],
\]
where $\sign:\R \setminus \{0\} \lra \{-1,1\}$ denotes the function given by $\sign(s):=-1$ if $s<0$ and $\sign(s):=1$ otherwise.

$D(f)$ can be described as follows. If the (oriented) face $f$ is internal, then $f$ is the common face of two tetrahedra $t_1$ and $t_2$ of $\T$, and the support of $D(f)$ is the union of the segment joining $B(f)$ with $B(t_1)$ and of the segment joining $B(f)$ and $B(t_2)$. If $f$ is a boundary face, then $f$ is face of just one tetrahedron $t$, and the support of $D(f)$ is the segment joining $B(f)$ with $B(t)$. In both cases, $D(f)$ is endowed with the orientation induced by $f$ via the right hand rule.

We denote by $\E'$ the set $\{D(f) \in C_1(\R^3;\Z) \, | \, f \in \F\}$ of all oriented dual edges of $\T$. % Moreover, we call (non-oriented) dual edge of $\T$ a $2$-subset $\{v',w'\}$ of $\R^3$ such that $\{v',w'\}=|\partial_1e'|$ for some $e' \in \E'$. We indicate by $E'$ the set of all (non-oriented) dual edges of $\T$.

 \item
 For every oriented edge $e=[\vv,\mb{w}] \in \E$, the oriented dual face $D(e)$ of $\T$ associated with $e$ is the element of $C_2(\R^3;\Z)$ defined as follows: if $F(e)$ denotes the set $\big\{f \in F \, \big| \, \{\vv,\mb{w}\} \subset f\big\}$; namely, the set of  faces of $\T$ incident on $e$, then we set
\[
D(e):=\sum_{f \in F(e)} \sum_{t \in K(f)} \sign \big( \bs{\tau}(e) \cdot \bs{\nu}([B(e),B(f),B(t)]) \big) \,   [B(e),B(f),B(t)]\, .
\]
The reader observes that the support of $D(e)$ is the union of triangles of $\R^3$ with vertices  $B(e)$, $B(f)$, and $B(t)$, where $f$ varies in $F(e)$ and $t$ in $K(f)$. Such triangles are oriented by $e$ via the right hand rule.

We denote by $\F'$ the set $\{D(e) \in C_2(\R^3;\Z) \, | \, e \in \E\}$ of all oriented dual faces of $\T$.
\end{itemize}

The preceding three definitions determine the bijection $D:K \cup \F \cup \E \lra V' \cup \E' \cup \F'$ such that $D(K)=V'$, $D(\F)=\E'$ and $D(\E)=\F'$.

We need also to describe the closed block dual barycentric complex of the triangulation $\T_\partial$ of $\partial\Omega$ induced by $\T$. Recall that  $V_\partial$, $\E_\partial$ and $\F_\partial$ denote the sets of vertices, of oriented edges and of oriented faces of $\T_\partial$, respectively.

\begin{itemize}
 \item
 For every oriented face $f \in \F_\partial$, the dual vertex $D_\partial(f)$ of $\T_\partial$ associated with $f$ is defined as the barycenter of $f$: $ D_\partial(f):=B(f)$.

We denote by $V'_\partial$ the set $\{D_\partial(f) \in \R^3 \, | \, f \in \F_\partial\}$ of all dual vertices of $\T_\partial$.

 \item
For every oriented edge $e \in \E_\partial$, the oriented dual edge $D_\partial(e)$ of $\T_\partial$ associa\-ted with $e$ is the element of $C_1(\R^3;\Z)$ defined as follows. Let $f_1$ and $f_2$ be the oriented faces in $\F_\partial$ incident on $e$, and let $\mb{n}(f_1)$ and $\mb{n}(f_2)$ be the outward unit normals of $\partial\Omega$ at $B(f_1)$ and at $B(f_2)$, respectively. Then we set
\[
D_\partial(e):=\sum_{i=1}^2 \sign\big(\bs{\tau}(e) \cdot (\mb{n}(f_i) \times \bs{\tau}([B(e),B(f_i)])) \big)[B(e),B(f_i)].
\]
$D_\partial(e)$ can be described as follows. By interchanging $f_1$ with $f_2$ if necessary, we can suppose that $f_1$ is on the left of $e$ and $f_2$ on the right of $e$ with respect to the orientation of $\partial\Omega$ induced by its outward unit vector field. Then we have:
\[
D_\partial(e)=[B(f_1),B(e)]+[B(e),B(f_2)]\, .
\]
We denote by $\E'_\partial$ the set $\{D_\partial(e) \in C_1(\R^3;\Z) \, | \, e \in \E_\partial\}$; namely, the set of all oriented dual edges of $\T_\partial$. %Moreover, we call (non-oriented) dual edge of $\T_\partial$ a $2$-subset $\{\mb{v}',\mb{w}'\}$ of $V'_\partial$ such that $\{\mb{v}',\mb{w}'\}=|\partial_1 e'|$ for some $e' \in \E'_\partial$. We indicate by $E'_\partial$ the set of all (non-oriented) dual edges of $\T_\partial$.

\item
For every ${\bf v} \in V_\partial$ , the oriented dual face $D_\partial ({\bf v})$ of $\T_\partial$ associa\-ted with ${\bf v}$ is the element of  $C_2(\R^3; \Z)$ defined as follows. If $E_\partial({\bf v})$ denotes the set $\{ e \in E_\partial \, | \, \{ {\bf v}\} \subset e\}$; namely the set of edges of $\T_\partial$ incident on ${\bf v}$ and,  for any edge $e \in  E_\partial$, $ F_\partial(e)$ denotes the set of oriented faces in $\partial \Omega$ incident in $e$ then
$$
D_\partial({\bf v})= \sum_{e \in E_\partial ({\bf v})} \sum_{f \in F_\partial (e)}\sign \left({\bf n}(f) \cdot \nu([{\bf v},B(e),B(f)]) \right) [{\bf v},B(e),B(f)]\, ,
$$
being ${\bf n}(f)$ the outward unit normal vector of $\partial \Omega$ at $B(f)$.

We denote by $\F'_\partial$ the set $\{D_\partial({\bf v}) \in \R^3 \, | \, {\bf v} \in V_\partial\}$ of all dual faces of $\T_\partial$.

\end{itemize}

\subsection{Construction of the retraction}

>From the computational point of view, in order to construct $g$ 1-boundaries with supports contained in $\partial\Omega$ whose homology classes in $\R^3 \setminus \Omega$ form a basis of $H_1(\R^3 \setminus \Omega;\Z)$ is more convenient to construct
$1$-cycles $\big\{{\sigma}_{0,s}^+\big\}_{s=0}^{g_0} \cup \big\{\widehat\sigma_{1,s}^+\big\}_{s=1}^{g_1} \cup \ldots \cup \big\{\widehat\sigma_{p,s}^+\big\}_{s=1}^{g_p}$ of $\R^3$ with support contained in $\Omega$ that are
a retraction of the cycles $\big\{\sigma_{0,s}\big\}_{s=0}^{g_0} \cup \big\{\widehat \sigma_{1,s}\big\}_{s=1}^{g_1} \cup \ldots \cup \big\{\widehat\sigma_{p,s}\big\}_{s=1}^{g_p}$ of $\T_\partial$
such that  $\lk(\widehat{\sigma}_{0,l}^-,\sigma_{0,j})= \lk(\widehat{\sigma}_{0,l},\sigma_{0,j}^+)$ and $\lk(\sigma_{k,l}^-,\widehat{\sigma}_{k,j})=\lk(\sigma_{k,l},\widehat{\sigma}_{k,j}^+)$. 

Let us see how to compute such a retraction. We can consider just the case of a simple loop $\eta$ with $|\eta| \subset \partial \Omega$.
%Then for each ${\bf v} \in |\eta|$ we consider a different 2-chain $\hbox{fun}({\bf v}, \eta)$ whose support is a subset of $|D_\partial({\bf v})|$ that, roughly speaking is the part of $D_\partial({\bf v})$ on the left of $\eta$.
%that is an element of $C_2(\R^3; \Z)$ defined as follows: if $E_\partial(v)$ denotes the set $\{ e \in E_\delta \, | \, v \subset e\}$; namely the set of edges of $\T_\partial$ incident on $v$ and, for any edge $e \in E_\delta$, $F_\partial(e)$ denotes the set of oriented faces in $\partial \Omega$ incident in $e$ then
%$$
%D_\partial(v)= \sum_{e \in E_\partial (v)} \sum_{f \in F_\partial (e)}\sign \left({\bf n} \cdot \nu([v,B(e),B(f)]) \right) [v,B(e),B(f)]\, .
%$$
%Being $\eta$ a simple loop,
Then for each ${\bf v} \in |\eta|$ there exist exactly two oriented edges $e_p=[{\bf v}_p,{\bf v}]$ and $e_s=[{\bf v},{\bf v}_s]$ such that the coefficents of $e_p$ and $e_s$ in $\eta$ are both equal one. % positive coe $\tau(\eta) = \tau ([{\bf v}_p,{\bf v}]) $, and  $\{{\bf v}, {\bf v}_s\} \subset | \eta|$ and $\tau(\eta)= \tau ([{\bf v},{\bf v}_s])$.

For each vertex ${\bf v} \in V_\partial$, $F_\partial({\bf v})$ denotes the set of oriented faces in $\partial \Omega$ incident in ${\bf v}$. Then if ${\bf v} \in |\eta|$  we denote $\hbox{left}({\bf v}, \eta)$ the faces $f \in F_\partial({\bf v})$  that are on the left with respect to $\eta$. More precisely, denoting by $V({\bf v})= \{ {\bf w} \in V_\partial \, | \, |[{\bf w}, {\bf v}]| \in E_\partial \}$ we sort the vertices in $V({\bf v})$ in the following way: we set ${\bf w}_0= {\bf v}_p$ and for $m>0$,
 ${\bf w}_{m}$ is the unique element of  $V({\bf v})$ such that $\nu([{\bf w}_{m-1}, {\bf v},{\bf w}_m])$ coincides with the outward unit normal of $\partial \Omega$ at these face. Clearly there exists $m^* \ge 1$ such that ${\bf w}_{m^*} = {\bf v}_s$. We define
$$
\hbox{left}({\bf v}, \eta):= \{ f \in F_\partial \, | \,
|f|=|[{\bf w}_{m-1}, {\bf v}, {\bf w}_m]|  \hbox{ for some } m\in \{1,\dots, m^*\} \}\, .
$$
%$$
%\hbox{left}({\bf v}, \eta):= \{ f \in F_\partial ({\bf v})\, | \, \sign({\bf n}(f) \cdot \nu([B(e_p), B(f), B(e_s)]) < 0 \}\, .
%$$
Then we denote $\hbox{fun}({\bf v}, \eta)$ the 2-chain
$$
\hbox{fun}({\bf v},\eta)= \sum_{e \in  E_\partial ({\bf v})} \sum_{f \in  F_\partial (e) \cap \hbox{left}({\bf v}, \eta)}
\sign({\bf n}(f) \cdot \nu([{\bf v},B(e),B(f)])) [{\bf v},B(e),B(f)]\, ,
$$
namely, the subchain of $D_\partial({\bf v})$ with support on the left of $\eta$.

First we replace $\eta$ with $\hat \eta= \eta -  \partial_2 \left( \sum_{{\bf v} \in |\eta|} \hbox{fun}({\bf v}) \right)$. Notice that since $\partial \Omega$ is orientable (????) then $\hat \eta$ is a formal linear combination of oriented boundary dual edges: $\hat \eta = \sum_{e \in \mathcal E_\partial} c_e D_\partial(e)$.
Then we define the interior retraction $\eta^+$ in the following way: $\eta^+= \hat \eta - \sum_{e \in \mathcal E_\partial} c_e \partial_2 D(e)$. $\eta^+$ is a linear combination of oriented {\em interior} dual edges.

%The reader observes that, for every $e \in \E_\partial$, $\coil(e)-D_\partial(e)$ is a $1$-chain of $\A'$, whose expression as a formal linear combination contains only oriented edges in $\E'$; namely, $\coil(e)-D_\partial(e)=\sum_{e' \in \E' \cup \E'_\partial}a_{e'}e'$ for some (unique) integer $a_{e'}$ such that $a_{e'}=0$ for every $e' \in \E'_\partial$.

%It is not difficult to see that $A_{0,i}=0$ if $i \in P$ because for $j=1,\dots,g_i$ we have $\widehat{\sigma}_{i,j} = \partial_2 S_{i,j} \subset \overline{D_i}$ while for $l=1,\dots,g_0$ we have $|\widehat{\sigma}_{0,l}| \subset \Gamma_0$ and $\Gamma_0 \cap \overline{D_i} = \emptyset$ if $i \in P$. Analogously $A_{k,0} =0$ if $k \in P$ because for $j=1,\dots,g_0$, $\sigma_{0,j} = \partial_2 S_{0,j} \subset \R^3 \setminus D_0$, for $l=1,\dots,g_l$, $|\sigma_{k,l}| \subset \Gamma_k$ and again $\Gamma_k \cap \R^3 \setminus D_0 = \emptyset$ if $k \in P$. Moreover $A_{k,i}=0$ if $k,i \in P$ and $k \ne i$ because for $j=1,\dots, g_i$,  $ \widehat{\sigma}_{i,j} = \partial_2 S_{i,j} \subset \overline{D_i}$, for $l=1,\dots,g_k$, $|\sigma_{k,l}| \subset \Gamma_k$ and $\Gamma_k \cap \overline{D_i}= \emptyset$ if $k \ne i$.

%%%%%%%

\subsection{Construction of homological Seifert surfaces} \label{sec:Seifert}

Given an orientation of the edges and of the faces of the triangulation $\T$ of $\overline{\Omega}$, the problem of computing homological Seifert surfaces  can be formulated as a linear system with as many unknowns as faces and as many equations as edges of $\T$.

Let $\gamma=\sum_{e \in \E}a_ee$ be a given $1$-boundary of $\T$. A $2$-chain $S=\sum_{f \in \F}b_ff$ of $\T$ is a homological Seifert surface of $\gamma$ in $\T$ if its coefficients $\{b_f\}_{f \in \F}$ satisfy the following equation: % in $C_1(\T;\Z)$:
\begin{equation} \label{eq:eq-hss}
\sum_{f \in \F}b_f\partial_2f=\sum_{e \in \E}a_ee, .
\end{equation}
%being $\partial_2: C_2(\T;\Z) \lra C_1(\T;\Z)$ the boundary operator and $C_k(\T;\Z)$ the space of $k$-chains of $\T$.

We can write this equation more explicitly as a linear system. Given $e \in \E$, let $\F(e)$ be the set $\big\{f \in \F \, \big| \, |e| \subset |f|\big\}$ of oriented faces in $\F$ incident on $e$ and let $\o_e:\F(e) \lra \{-1,1\}$ be the function sending $f \in \F(e)$ into the coefficient of $e$ in the expression of $\partial_2f$ as a formal linear combination of oriented edges in $\E$. Equation \eqref{eq:eq-hss} is equivalent to the linear system
$$
\sum_{f \in \F(e)}\o_e(f)b_f=a_e \quad \forall \,  e \in \E\, ,
$$
where the unknowns $\{b_f\}_{f \in \F}$ are integers.

The matrix of this linear system is the incidence matrix between faces and edges of $\T$. Its entries take values in the set $\{-1,0,1\}$. This matrix is very sparse because it has just three nonzero entries per columns and the number of nonzero entries on each row is equal to the number of faces incident on the edge corresponding to the row. This kind of problems are usually solved using the Smith normal form, a computationally demanding algorithm even in the case of sparse matrices (see e.g. \cite{Mun84}, \cite{DSV01}).

A first difficulty to devise a general and efficient algorithm to compute a homological Seifert surface $S$ of a given $1$-boundary $\gamma$ of $\T$ is that the problem has not a unique solution. If $\mk{t}$ is the number of tetrahedra of $\T$ and $\Gamma_0,\Gamma_1,\ldots,\Gamma_p$ are the connected components of $\partial\Omega$, then the kernel of the incidence matrix is a free abelian group of rank $\mk{t}+p \,$; namely, it is isomorphic to $\Z^{\mk{t}+p}$. One of its basis is given by the boundaries of tetrahedra of $\T$ and by the $2$-chains $\gamma_1,\ldots,\gamma_p$ associated with the triangulations of $\Gamma_1,\ldots,\Gamma_p$ induced by $\T$.

A natural strategy to obtain a unique solution $S$ is to add $\mk{t}+p$ equations, by setting equal to zero the unknowns corresponding to suitable faces $f_1,\ldots,f_{\mk{t}+p}$ of $\T$. From the geometric point of view, this is equivalent to impose that the homological Seifert surface $S$ of $\gamma$ does not contain the faces $f_1,\ldots,f_{\mk{t}+p}$. From the computational point of view, it is equivalent to eliminate some unknowns of the problem to obtain an equivalent solvable linear system with a unique solution. We will use graph techniques to describe which coefficients set equal zero. More precisely, we introduce the \textit{complete dual graph of $\T$} denoted by $\A'$.

To do that we need to recall some notions of homology theory (see e.g. \cite{Mun84}).

\begin{defin} \label{def:cdg}
We call $\A':=(V' \cup V'_\partial,E' \cup E'_\partial)$ \emph{complete dual graph of $\T$}.
A \emph{$1$-chain of $\A'$} is a formal linear combination of oriented dual edges in $\E' \cup \E'_\partial$ with integer coefficients.  A $1$-chain $\gamma$ of $\A'$ is called \emph{$1$-cycle of $\A'$} if $\partial_1\gamma=0$. %We denote by $C_1(\A';\Z)$ the abelian subgroup of $C_1(\R^3;\Z)$ consisting of all $1$-chains of $\A'$, and by  $Z_1(\A';\Z)$ the abelian subgroup of $Z_1(\R^3;\Z)$ consisting of all $1$-cycles of $\A'$.
\end{defin}

%More precisely, we introduce the \textit{complete dual graph of $\T$} denoted by $\A'$. Let $F$ be the set of faces of $\T$, $F_\partial$ the set of faces of $\T$ contained in $\partial\Omega$ and $E_\partial$ the set of edges of $\T$ contained in $\partial\Omega$. The dual edge $D(f)$ of a face $f \in F$ and the dual edge $D(e)$ of an edge $e \in E_\partial$ are defined in the following way. If $f \in F_\partial$, then it is contained in a unique tetrahedron $t$ and $D(f)f:=\{B(f),B(t)\}$, where $B(f)$ is the barycenter of $f$ and $B(t)$ the barycenter of $t$. If $f$ is an internal face of $\T$ (namely $f \in F \setminus F_\partial$), then it is the common face of exactly two tetrahedra $t_1$ and $t_2$, and $D(f)f:=\{B(t_1),B(t_2)\}$. Similarly, if $e \in E_\partial$, then it is the common edge of exactly two faces $f_1,f_2$ in $F_\partial$, and $D(f):=\{B(f_1),B(f_2)\}$. The vertices of $\A'$ are the barycenters of tetrahedra of $\T$ and the barycenters of faces in $F_\partial$, and the edges of $\A'$ are the dual edges $\{\epsilon'_f\}_{f \in F}$ and $\{\epsilon'_\ell\}_{\ell \in E_\partial}$.

Our idea is to consider a suitable spanning tree $\B'$ of $\A'$ and to set equal to zero the unknowns corresponding to faces of $\T$ whose dual edge belongs to $\B'$. The total number of arcs in the spanning tree $\B'$ is equal to the number of tetrahedra of $\T$ plus the number of faces of $\T$ contained in $\partial \Omega$ minus one, but, clearly, not all the arcs of $\B'$ correspond to faces of $T$ since there are also arcs corresponding to edges of $\T$ contained in $\partial \Omega$.
The choice of $\B'$ is promising if and only if the number of faces of $\T$ whose dual edge belongs to $\B'$; namely, the number of arcs of $\B'$ not contained in $\partial\Omega$ is equal to $\mk{t}+p$ but not all the spanning trees of $\A'$ satisfy this equality. It is not difficult to see that for all spanning tree $\B'$ of $\A'$, $N_{\B'}\ge \mk{t}+p$. The equality holds true if and only if for each $i \in \{ 0,1,\dots,p \}$ the graph $\B'_i$ induced by $\B'$  on $\Gamma_i$ is a spanning tree of $\A'_i$, the graph induced by $\A'$ on $\Gamma_i$. If the spanning tree $\B'$ of $\A'$ has the latter property, then we call it \textit{Seifert dual spanning tree of $\T$}.

Let $\B'=(V' \cup V'_\partial,N')$ be a Seifert dual spanning tree of $\T$ and let $\mc{N}'$ be its set of oriented dual edges. In \cite{ABGS15} we proved  that the following linear system
\begin{equation} \label{eq:LS}
\left\{
 \begin{array}{ll}
 \sum_{f \in \F(e)}\o_e(f)b_f=a_e & \text{if $e \in \E$} \\
 b_f=0  & \text{if $\epsilon'_f \in \mc{N}'$}
 \end{array}
\right.
\end{equation}
has a unique solution.
In \cite{ABGS15} we give also an explicit formula for the coefficients of the solution of \eqref{eq:oe}.
Roughly speaking the coefficient in $S$ of any face $f$ with $D(f) \in \mc{N}'$ is equal to the linking number between $\gamma$ and the unique $1$-cycle, $\cb(D(f))$ of $\A'$ with all the edges except $D(f)$ contained in $\B'$. But this two cycles could intersect on $\partial \Omega$ and in this case is necessary, in order to define the linking number, to ``retract'' $\gamma$ inside $\overline \Omega$.
More precisely we prove that
\[
b_f=\lk\big(R_+(\gamma),\cb(D(f))\big)
\]
for every $f \in \F$.
The cycle $R_+(\gamma)$ is defined in the following way. For every oriented edge $e=[\vv,\mb{w}]$ in $\E_\partial$, choose a tetrahedron $t_e \in K$ incident on $e$ (namely, $\{\vv,\mb{w}\} \subset t_e$), denote by $\mb{d}_e$ the barycenter of the triangle of $\R^3$ of vertices $\vv$, $\mb{w}$, $B(t_e)$, and define the $1$-chain $r_+(e)$ of $\R^3$ by setting
\[
r_+(e):=[\vv,\mb{d}_e]+[\mb{d}_e,\mb{w}].
\]
Given $\xi=\sum_{e \in \E}\alpha_ee$, we define:
\[
R_+(\xi):=\sum_{e \in \E \setminus \E_\partial} \alpha_e e + \sum_{e \in \E_\partial} \alpha_e r_+(e).
\]

To compute the solution of \eqref{eq:LS} is convenient to adopt an elimination procedure and to use the explicit formula if it is necessary to restart the elimination procedure.

Let us set  $\G=\{f \in \F \, | \, D(f) \in \mc{N}'\}$.

\begin{alg} \label{alg:main}
\begin{enumerate}
\item[]
  \item $\mc R:= \mc{G}$, $\mc D:=\E$.
  \item  while $\mc R \ne \F$
  \begin{enumerate}
    \item $n_{\mc R}:=card(\mc R)$
    \item for every $e \in \mc D$
    \begin{enumerate}
      \item if every oriented face of $\F(e)$ belong to $\mc R$
\begin{enumerate}
\item $\mc D=\mc D\setminus \{e\}$
\end{enumerate}
      \item if exactly one oriented face  $f^* \in \F(e)$ does not belong to $\mc R$
      \begin{enumerate}
        \item compute $b_f$ via \eqref{eq:LS} %$\sum_{f \in \F(e)}\o_e(f)b_f=a_e$
        \item $\mc R=\mc R \cup \{ f \}$
        \item $\mc D=\mc D\setminus \{e\}$
      \end{enumerate}
    \end{enumerate}
    \item if $card(\mc R)=n_{\mc R}$
    \begin{enumerate}
      \item pick $f \not \in \mc R$ and compute $b_f=\lk(R_+(\gamma),\cb(D(f)))$
      \item $\mc R=\mc R \cup \{ f \}$
    \end{enumerate}
  \end{enumerate}
\end{enumerate}
\end{alg}

We have shown in  \cite{ABGS15}  that very often, (2.c) never occours and the homological Seifert surface can be computed by a very fast elimination procedure. In the examples that we tried the elimantion prcedure fails just when considering a non trivial computational domain that is a cube with a knotted cavity, and a boundary that embrace twice the cavity. In this case it was enough to use once tha explicti formula  to restart the eeimination procedure.

Concerning the existence and the construction of internal homological Seifert surfaces of $\gamma$; namely, homological Seifert surfaces of $\gamma$ formed only by internal faces of $\T$ we proved in \cite{ABGS15} that a necessary condition for the solution of an internal homological Seifert surface is that the boundary $\gamma$ must be corner free, namely, no edge of $\gamma$ belongs to two faces on $\partial \Omega$ of the same tetrahedra. Clearly if the mesh is such that no tetrahedra has two faces on $\partial \Omega$ then each boundary is corner free. Moreover in \cite{ABGS15} we identify a family of Seifert dual spanning trees of $\T$, the so called strongly-Seifert dual spanning trees, such that if the boundary $\gamma$ is corner free then the computed homological Seifert surface using such a Seifert dual spanning tree and Algorithm~\ref{alg:main} is internal.

Let us denote by plug (the support of) the dual edge of a boundary face. A maximal plug set is a set of disjoint plugs that is not subset of any other set of disjoint plugs. If the mesh is such that no tetrahedra has two faces on $\partial \Omega$ then the set of all plugs is the unique maximal plug set. Notice that if a tetrahedra has two faces on $\partial \Omega$ then the dual edges of these two faces are not disjoint because the barycenter of the tetrahedra is a common point.

Given a Seifert dual spanning tree $\B'$ of $\T$, we say that $\B'$ is a strongly-Seifert dual spanning tree of $\T$ if it contains a maximal plug-set of $\T$.

\section{Some numerical experiments}
We have implemented the algorithm proposed in this paper in C++. All computations have been run on an Intel Core i7-3720QM @ 2.60GHz laptop with 16Gb of RAM.

The first elementary example is a solid torus with a concentric toric cavity. The boundary of the domain has two connected components and none of them is homologically trivial. The generators of  $H_1(\R^3 \setminus \Omega; \Z)$ are the two cycles $\hat \sigma_1$, $\hat \sigma_2$ represented in Figure~\ref{fig:intro1} as continuous lines. Clearly none of them is the boundary of a 2-chain contained in $\overline \Omega$. Therefore, the first step is to complete each one with a cycle trivial in $H_1(\R^3 \setminus \Omega; \Z)$ in order to obtain a 1-boundary in the same homology class.

 In Figure~\ref{fig:torostorato} we show the two representatives of $H_2(\overline \Omega, \partial \Omega; \Z)$ on the left the one corresponding to to the cycle $\hat{\sigma}_{1}$ and on the right the one corresponding to the cycle $\hat \sigma_{2}$.

\begin{figure}[!h]
\centering
\includegraphics[width=.8\columnwidth]{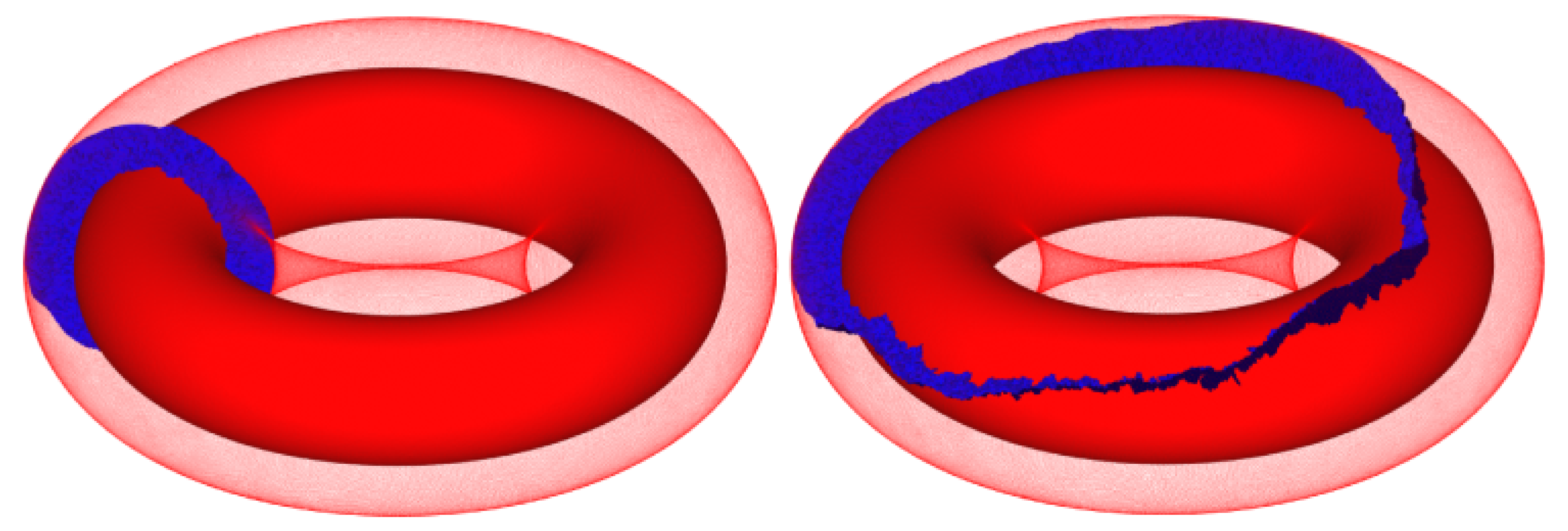}
\caption{The torus with a toric cavity. Representatives of a basis of $H_2(\overline \Omega, \partial \Omega;\Z)$ for the finest mesh are shown.}
\label{fig:torostorato}
\end{figure}

Table \ref{tab:torostorato} contains the details on the number of edges and faces in the complex for four different meshes and the corresponding computational time divided into four contributions: \emph{Mesh pre-processing} represent the time spent for loading the mesh from hard disk and computing all incidences between the elements of the complex. \emph{Hiptmair--Ostrowski} is the time spent for computing the bases of $H_1(\overline \Omega; \Z)$ and $H_1(\R^3 \setminus \Omega;\Z)$ with the algorithm introduced in \cite{HO02}. We remark that each one of the $g$ elements of the constructed bases is supported in a single connected component of the boundary. \emph{Boundary retrieval} is the time employed to find the 1-boundaries from the homology basis, which is the main contribution of this paper. Finally, \emph{elimination algorithm} represents the time needed for the construction of the homological Seifert surfaces with the iterative elimination algorithm introduced in \cite{ABGS15}.

\begin{table}[!h]
\centering
\rowcolors{2}{gray!25}{white}
\begin{tabular}{lcccc}
\rowcolor{gray!50}
Benchmark torus with toric cavity & Mesh 1 & Mesh 2 & Mesh 3 & Mesh 4\\
Edges                   & 51521 & 145963 & 1321902 & 10238231\\
Faces                   & 76330 & 227314 & 2177158 & 17210016\\
Mesh pre-processing [s]            & 0.607 & 1.800  & 17.76  & 141.2 \\
Hiptmair--Ostrowski [s]            & 0.084 & 0.216  & 0.863  & 3.909 \\
Boundary retrieval [s]             & 0.012 & 0.034  & 0.122  & 0.513 \\
Elimination algorithm [s]          & 0.061 & 0.193  & 2.720  & 24.52\\
Total Time (this paper) [s]        & 0.764 & 2.243  & 21.46  & 170.1\\
Total Time (GMSH \cite{gmsh}) [s]  & 1.544 & 5.538  & 86.28  & $>2$ hours\\
Speedup                            & 2.0   & 2.5    & 4.0    & $-$\\
\end{tabular}
 \caption{The torus with a toric cavity: the number of geometric elements of the triangulation and the computational time.}\label{tab:torostorato}
\vspace{-.2cm}
\end{table}

In Table~\ref{tab:torostorato} (and in the next tables) we include also the time spent by a state-of-the-art implementation of the purely algebraic algorithm to compute the $H_2(\overline \Omega, \partial \Omega; \Z)$ generators contained in the popular mesh generator GMSH (see \cite{gmsh}). As one can see, the speed up of the technique proposed in this paper is about 2 in case of small meshes but gets much bigger when considering real-life meshes with millions of tetrahedra. For example, in the last mesh comprising about 10 millions edges, the generators have been computed in less than 3 minutes with the technique introduced in this paper, whereas GMSH did not produced generators after 2 hours of wall time (GMSH would require much more memory to run this test in reasonable time). We also should remark that in the tables and figures we always consider the wall time for our algorithm, whereas GMSH output the CPU time (which, given that does not include time for accessing memory and CPU time, is smaller than wall time). Finally, we also mention that in the GMSH CPU time we have not counted the time needed to load the mesh into GMSH from hard disk and the time required for build the additional data structure for the cell complex incidences.

In the algorithm proposed in this paper the more expensive computation is the one concerning the linking number that, in the worst case, has a computational cost proportional to the square of the number of edges in the boundary of $\Omega$. The total number of linking numbers to be computed is $\sum_{r=0}^P (2g_r)^2$ for the automatic construction of generators of a basis of  $H_1(\R^3 \setminus \Omega;\Z)$ using the algorithm by Hiptmair--Ostrowski, plus the computation of the coefficient $\beta^{r,s}$, $r=0,\dots,P$ that are $ \sum_{r=0}^P g_r \sum_{s=0\, s \ne r}^P g_s = \sum_{r=0}^P g_r  (g-g_r)= g^2 - \sum_{r=0}^P g_r^2$. So the total number is  $g^2+3\sum_{r=0}^P g_r^2$ that, for this first example means, $10$ linking number to be computed.

All observations related to this simple benchmark still hold true for other numerical experiments.
In our second example the domain is the complement of the Borromean rings ($g=3$) with respect to a box. The number of connected components of the boundary is $4$ and the first Betti number of the domain is equal to $3$. The number of linking numbers to be computed is $9+3(1+1+1)= 18$.
In Figure~\ref{fig:borromean2} we show three representatives of a basis of $H_2(\overline \Omega, \partial \Omega,\Z)$  for two different meshes.
\begin{figure}[!h]
\centering
\includegraphics[width=.7\columnwidth]{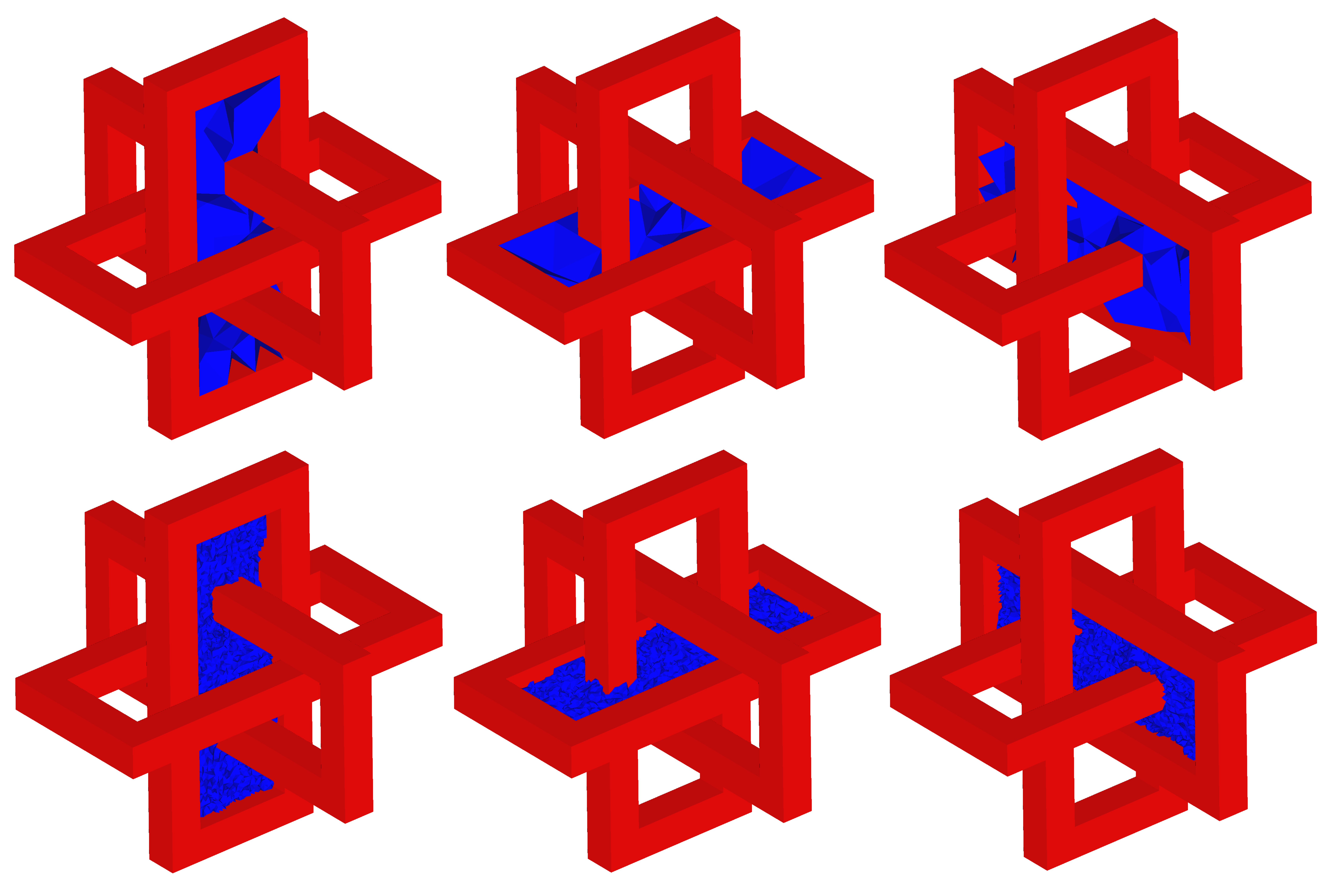}
\caption{The Borromean rings: on the top, three representatives of a basis of $H_2(\overline \Omega, \partial \Omega;\Z)$ for the coarsest mesh. On the bottom, three representatives of a basis of $H_2(\overline \Omega, \partial \Omega;\Z)$ for the finest mesh. The box is not shown for clarity.}
\label{fig:borromean2}
\end{figure}

Table~\ref{tab:borromean} shows the dimension of the four different meshes considered, the computational time and the speed up with respect to GMSH. As can be seen, for the coarsest mesh the speedup is 3 and it increase when considering bigger meshes.
\begin{table}[!h]
\centering
\rowcolors{2}{gray!25}{white}
\begin{tabular}{lcccc}
\rowcolor{gray!50}
Benchmark Borromean rings & Mesh 1 & Mesh 2 & Mesh 3 & Mesh 4\\
%Vertices                & 4692  & 33147  & 246102  & 1647591\\
Edges                   & 29003 & 214807 & 1640732 & 11139998\\
Faces                   & 46723 & 355752 & 2760283 & 18870406\\
%Tetrahedra              & 22411 & 174091 & 1365652 & 9377998\\
%Boundary Vertices       & 1903  & 7572   & 28981   & 114412 \\
%Boundary Edges          & 5703  & 22710  & 86937   & 34323  \\
%Boundary Faces          & 3802  & 15140  & 57958   & 228820 \\
Mesh pre-processing [s]            & 0.300 & 2.530 & 21.53 & 167.1 \\
Hiptmair--Ostrowski [s]            & 0.020 & 0.080 & 0.410 & 2.165 \\
Boundary retrieval [s]             & 0.010 & 0.010 & 0.030 & 0.183 \\
Elimination algorithm [s]          & 0.030 & 0.320 & 3.460 & 30.98\\\hline
Total Time (this paper) [s]        & 0.360 & 2.940 & 25.43 & 200.4\\
Total Time (GMSH \cite{gmsh}) [s]  & 1.076 & 11.19 & 121.1 & $>2$ hours\\
Speedup                             & 3.0  & 3.8   & 4.8    & $-$\\
\end{tabular}
 \caption{The Borromean rings: the number of geometric elements of the triangulation and the computational time.}\label{tab:borromean}
\vspace{-.2cm}
\end{table}

In the next two examples  the domain is the complement with respect to a two-torous of a trefoil knot (Example 3) and the Hopf link (Example 4).

In Example 3 the boundary of the domain has 2 connected components and its first Betti number is $3$ . The number of linking number to be computed is $9+3(4+1)=24$.
In Figure~\ref{fig:trefoil2} we show the three generators of $H_2(\overline \Omega, \partial \Omega; \Z)$ for the trefoil benchmark  and again,
 in Table~\ref{tab:trefoil} we give the dimension of the four different meshes considered, the computational time and the speed up with respect to GMSH with results similar to the previous examples.
\begin{figure}[!h]
\centering
\includegraphics[width=.8\columnwidth]{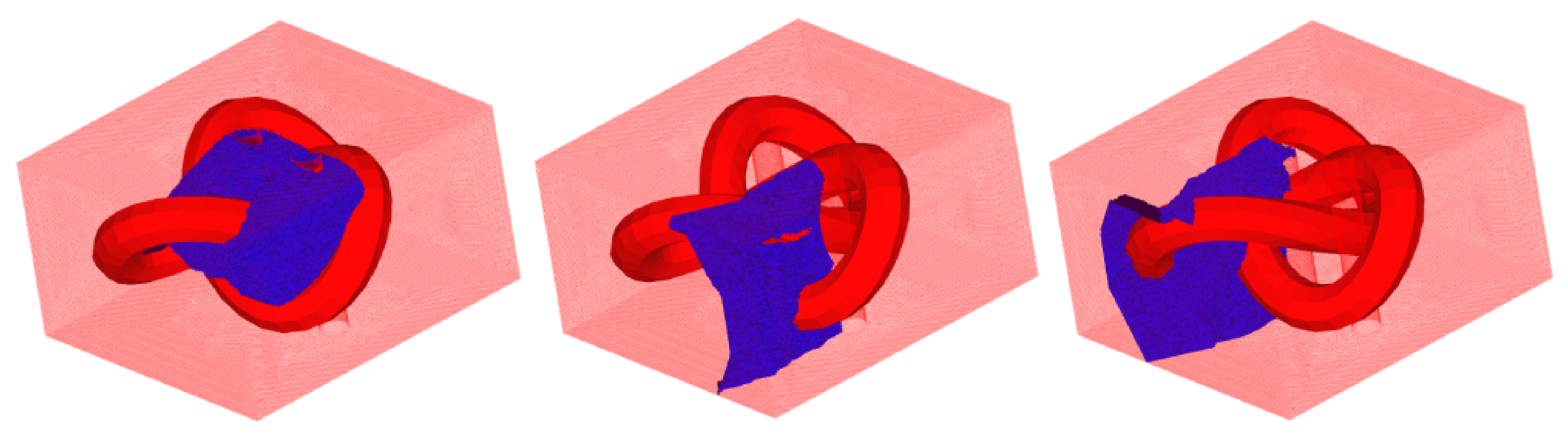}
\caption{The trefoil knot benchmark. Representatives of a basis of $H_2(\overline\Omega, \partial \Omega; \Z)$ generators for the finest mesh are shown.}
\label{fig:trefoil2}
\end{figure}
\begin{table}[!h]
\centering
\rowcolors{2}{gray!25}{white}
\begin{tabular}{ccccc}
\rowcolor{gray!50}
Benchmark trefoil knot  & Mesh 1 & Mesh 2 & Mesh 3 & Mesh 4\\
%Vertices                & 7337  & 28606  & 195146  & 1541567 \\
Edges                   & 45018 & 176123 & 1260407 & 10264628 \\
Faces                   & 72305 & 283758 & 2086618 & 17305967 \\
%Tetrahedra              & 34625 & 136242 & 1021358 & 8582907 \\
%Boundary Vertices       & 3053  & 11272  & 43900   & 140151 \\
%Boundary Edges          & 9165  & 33822  & 131706  & 420459 \\
%Boundary Faces          & 6110  & 22548  & 87804   & 280306 \\
Pre-processing time [s]            & 0.554 & 2.103 & 16.72 & 153.6\\
Hiptmair--Ostrowski [s]            & 0.046 & 0.163 & 0.767 & 3.099\\
Boundary retrieval [s]             & 0.017 & 0.056 & 0.113 & 0.736\\
Elimination algorithm [s]          & 0.052 & 0.256 & 2.595 & 27.98\\\hline
Total Time (this paper) [s]        & 0.669 & 2.578 & 20.20 & 185.4\\
Total Time (GMSH \cite{gmsh}) [s]  & 1.638 & 8.814 & 94287 & $>2$ hours \\
Speedup                            & 2.5  & 3.4  & 4.7   & $-$\\
\end{tabular}
 \caption{Benchmark trefoil knot: the number of geometric elements of the triangulation and the computational time.}\label{tab:trefoil}
 \vspace{-.2cm}
\end{table}
\begin{figure}[!h]
\centering
\includegraphics[width=.55\columnwidth]{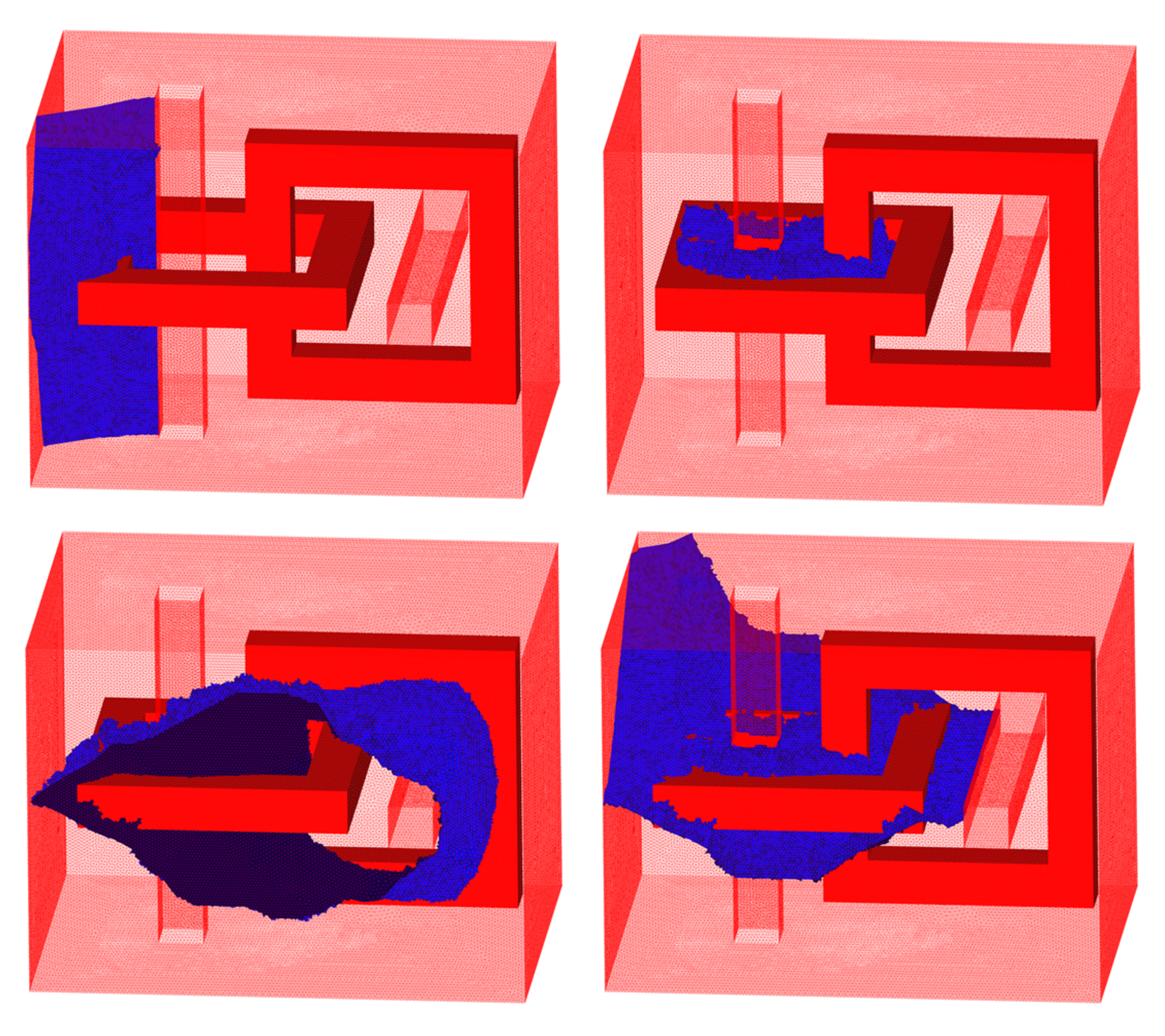}
\caption{The Hopf link benchmark. Representatives of a basis of $H_2(\overline \Omega, \partial \Omega; \Z)$ generators for the finest mesh are shown.}
\label{fig:hopf2}
\end{figure}
In Example 4 the  the domain  is the complement of a Hopf link with respect to a two torous, as illustrated in Figure~\ref{fig:hopf2} where we show four surfaces that are representatives of a basis of $H_2(\overline \Omega, \partial \Omega; \Z)$. In this case the number of connected components of the boundary of the domain is 3, the first Betti number of the domain is 4, and the total number of linking numbers computed is $4^2+3*(4+1+1)=34$. In Table~\ref{tab:trefoil} we report the information about the meshes considered and the computational time. The speed up with respect to GMSH is similar to previous examples.
\begin{table}[!h]
\centering
\rowcolors{2}{gray!25}{white}
\begin{tabular}{ccccc}
\rowcolor{gray!50}
Benchmark Hopf link     & Mesh 1 & Mesh 2 & Mesh 3 & Mesh 4\\
%Vertices                & 6414  & 40804  & 338598  & 1508069\\
Edges                   & 39692 & 263041 & 2255753 & 10152372\\
Faces                   & 64007 & 434513 & 3794183 & 17148224\\
%Tetrahedra              & 30730 & 212277 & 1877029 & 8503922\\
%Boundary Vertices       & 2545  & 9957   & 40123   & 140378\\
%Boundary Edges          & 7641  & 29877  & 120375  & 421140\\
%Boundary Faces          & 5094  & 19918  & 80250   & 280760\\
Mesh pre-processing [s]            & 0.857 & 3.183 & 30.98 & 153.1\\
Hiptmair--Ostrowski [s]            & 0.029 & 0.131 & 0.657 & 3.031\\
Boundary retrieval [s]             & 0.008 & 0.034 & 0.134 & 0.498\\
Elimination algorithm [s]          & 0.044 & 0.415 & 5.118 & 27.82\\\hline
Total Time (this paper) [s]        & 0.938 & 3.763 & 36.89 & 184.5\\
Total Time (GMSH \cite{gmsh}) [s]  & 1.576 & 16.04 & 201.7 & $>2$ hours\\
Speedup                             & 1.7  & 4.3   & 5.5   & $-$\\
\end{tabular}
 \caption{Benchmark Hopf link: the number of geometric elements of the triangulation and the computational time.}\label{tab:hopf}
 \vspace{-.2cm}
\end{table}

As expected, for these four benchmark problems the algorithm proposed in this paper has a linear complexity behaviour as can be seen in Figure~\ref{fig:benchmarks} that illustrate also the speedup with respect to GMSH.
{\small
\begin{figure}[!h]
\begin{minipage}[b]{0.49\linewidth}
\begin{tikzpicture}[scale=0.80]
    \begin{axis}[
            legend pos= south east,
            %x tick label style={/pgf/number format/1000 sep=},
        ]
\addplot[mark=*,mark size=1.5pt, red,mark color=red,] coordinates
{
(76330,1.544)
(227314,5.538)
(2177158,86.284)};
\addplot[mark=square*,mark size=1.5pt, blue,mark color=blue,] coordinates
{
(76330,0.764)
(227314,2.243)
(2177158,21.463)
(17210016,170.095)};
\legend{GMSH,This paper};
\end{axis}
\end{tikzpicture}
\subcaption{Torus with a toric hole benchmark.}
\end{minipage}
\begin{minipage}[b]{0.49\linewidth}
\begin{tikzpicture}[scale=0.80]
    \begin{axis}[
            legend pos= south east,
            %x tick label style={/pgf/number format/1000 sep=},
        ]
\addplot[mark=*,mark size=1.5pt, red,mark color=red,] coordinates
{
(46723,1076/1000)
(355752,11185/1000)
(2760283,121119/1000)};
\addplot[mark=square*,mark size=1.5pt, blue,mark color=blue,] coordinates
{
(46723,360/1000)
(355752,2940/1000)
(2760283,25430/1000)
(18870406,200394/1000)};
\legend{GMSH,This paper};
\end{axis}
\end{tikzpicture}
\subcaption{Borromean rings benchmark.}
\end{minipage}\\
\begin{minipage}[b]{0.49\linewidth}
\begin{tikzpicture}[scale=0.80]
    \begin{axis}[
            legend pos= south east,
            %x tick label style={/pgf/number format/1000 sep=},
        ]
\addplot[mark=*,mark size=1.5pt, red,mark color=red,] coordinates
{
(46723,1638/1000)
(355752,8814/1000)
(2760283,94287/1000)};
\addplot[mark=square*,mark size=1.5pt, blue,mark color=blue,] coordinates
{
(72305,669/1000)
(283758,2578/1000)
(2086618,20199/1000)
(17305967,185428/1000)};
\legend{GMSH,This paper};
\end{axis}
\end{tikzpicture}
\subcaption{Trefoil knot benchmark.}
\end{minipage}
\begin{minipage}[b]{0.49\linewidth}
\begin{tikzpicture}[scale=0.80]
    \begin{axis}[
            legend pos= south east,
            %x tick label style={/pgf/number format/1000 sep=},
        ]
\addplot[mark=*,mark size=1.5pt, red,mark color=red,] coordinates
{
(64007,1576/1000)
(434513,16037/1000)
(3794183,201678/1000)};
\addplot[mark=square*,mark size=1.5pt, blue,mark color=blue,] coordinates
{
(64007,938/1000)
(434513,3763/1000)
(3794183,36885/1000)
(17148224,184477/1000)};
\legend{GMSH,This paper};
\end{axis}
\end{tikzpicture}
\subcaption{Hopf link benchmark.}
\end{minipage}
\caption{Time [s] vs mesh density [number of faces] for the GMSH code and the implementation of the algorithm proposed in this paper.}\label{fig:benchmarks}
\end{figure}
}

%(Figure \ref{tab:borromean}, Fig. \ref{fig:borromean2}, Fig. \ref{fig:benchmarks}b and Fig. \ref{tab:borromean}), the complement of a Hopf link w.r.t. a box (Fig. \ref{fig:hopf2}, Fig. \ref{fig:benchmarks}c and  \ref{tab:hopf}) and the complement of a trefoil knot w.r.t. a solid 2-fold torus (Fig. \ref{fig:trefoil2}, Fig. \ref{fig:benchmarks}d and  \ref{tab:trefoil}).
%

We finally consider an example where the dimension of $H_2(\overline \Omega, \partial \Omega; \Z)$  is much bigger (equal to $128$) consisting in a solid 100-fold torus with eight cavities, see Figure~\ref{fig:100holes}. The cavities are two solid 11-fold tori and six solid tori. So, the number of connected components of the boundary is  $9$ and the first Betti number of the domain is $100+ 22+ 6= 128$.
\begin{figure}[!h]
\centering
\vspace{-2.5cm}
\includegraphics[width=.7\columnwidth]{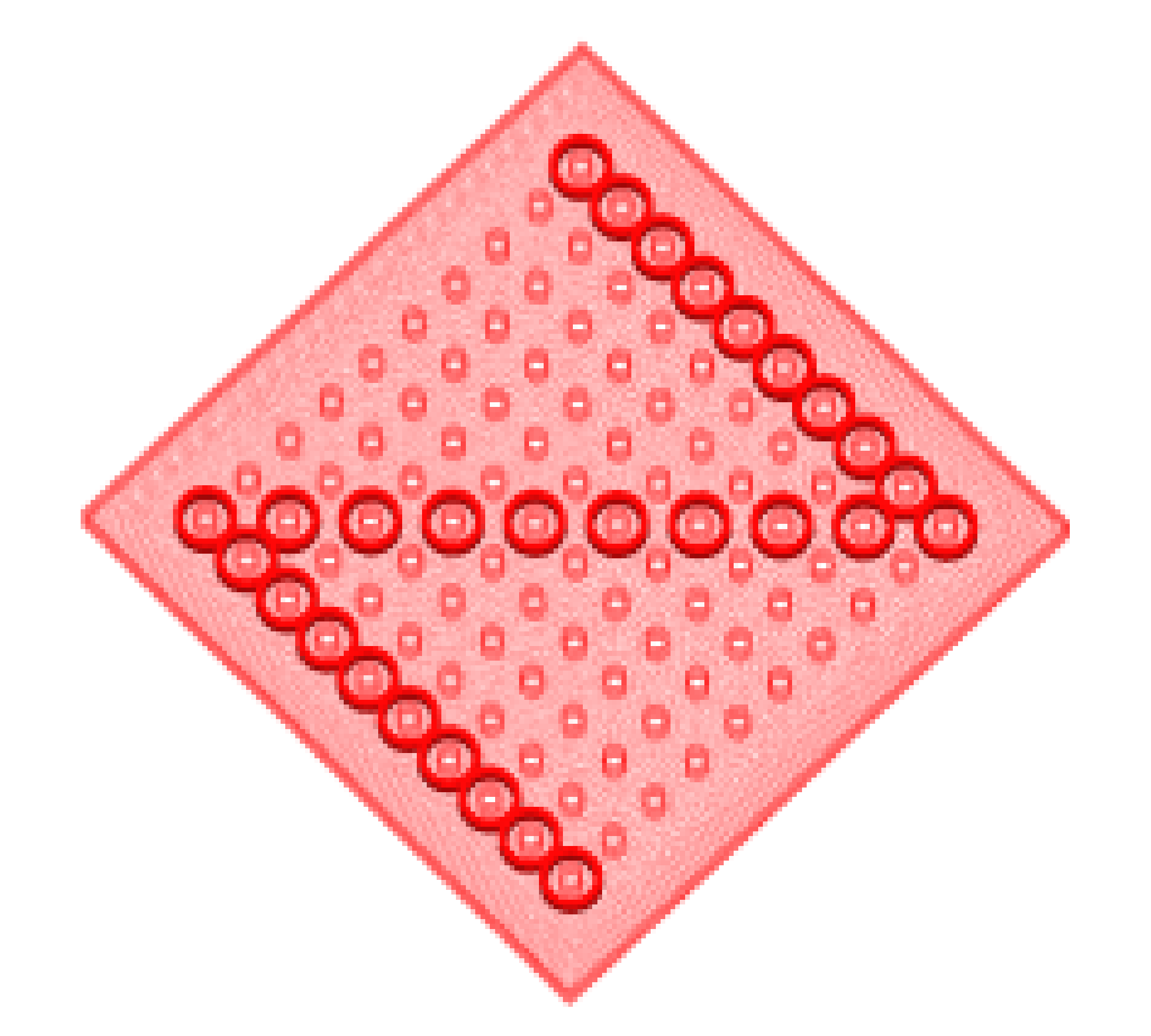}
\vspace{-3cm}
\caption{The plate with holes benchmark.}
\label{fig:100holes}
\end{figure}

In this case the number of linking numbers to compute is huge, equal to $128^2+3 \, (100^2+ 2 \, 11^2+6)=47128$. For this reason in the smaller examples GMSH results faster than the approach proposed in this paper as can be seen in Table~\ref{tab:100holes}. Yet, when the complex cardinality gets into the range of real-life problems, we again get a sensible speedup. In particular, in the last mesh of more than 7 millions edges, GMSH wasn't able to produce a results after 4 hours of wall time, whereas our implementation took less than five minutes.
\begin{table}[!h]
\centering
\rowcolors{2}{gray!25}{white}
\begin{tabular}{ccccc}
\rowcolor{gray!50}
Benchmark plate with holes & Mesh 1 & Mesh 2 & Mesh 3 & Mesh 4\\
%Vertices                & 8620   & 56542  & 183447  & 1172590 \\
Edges                   & 45596  & 334526 & 1164992 & 7740566 \\
Faces                   & 65396  & 523825 & 1908897 & 12956479 \\
%Tetrahedra              & 28539  & 245960 & 927471  & 6388622\\
%Boundary Vertices       & 8080   & 31667  & 53717   & 178997\\
%Boundary Edges          & 24954  & 95715  & 161865  & 537705\\
%Boundary Faces          & 16636  & 63810  & 107910  & 358470\\\hline
Pre-processing time [s]           & 0.493  & 4.102   & 15.79  & 118.6\\
Hiptmair--Ostrowski [s]           & 3.251  & 23.14   & 17.60  & 50.66\\
Boundary retrival [s]             & 1.458  & 19.14   & 19.97  & 39.11\\
Elimination algorithm [s]         & 0.198  & 1.789   & 6.931  & 55.95\\ \hline
Total Time (this paper) [s]       & 5.400  & 48.17   & 60.30  & 264.3\\
Total Time (GMSH \cite{gmsh}) [s] & 2.044  & 27.86   & 138.1  & $> 4$ hours\\
Speedup                           & 0.38   & 0.58    & 2.3    & $-$\\
\end{tabular}
 \caption{Benchmark plate with holes: the number of geometric elements of the triangulation and the computational time.}\label{tab:100holes}
\end{table}

%
%\begin{figure}[!t]
%\centering
%\includegraphics[width=5.cm]{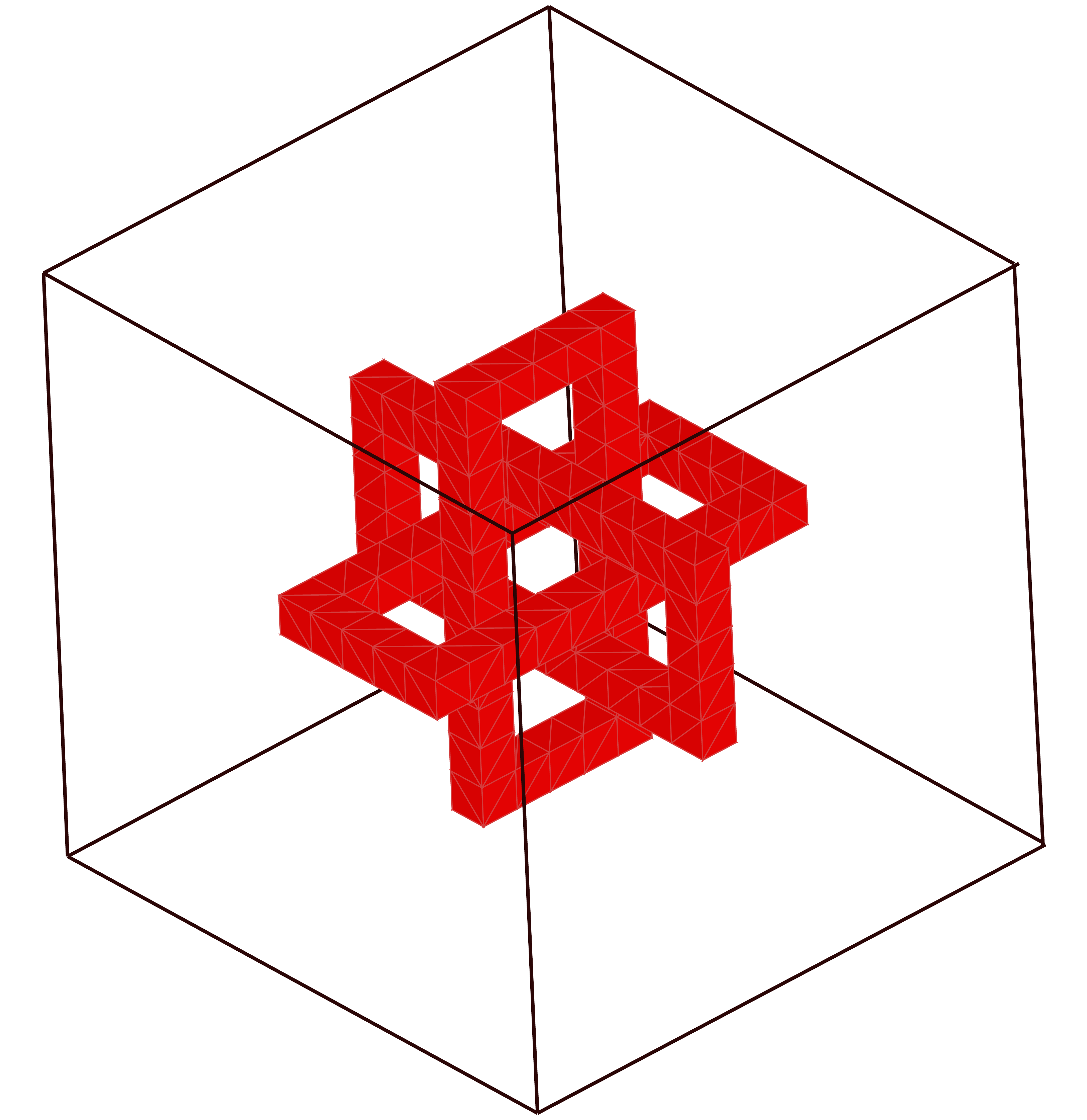}
%\caption{The Borromean rings benchmark. The domain is the complement of the Borromean rings with respect to a box. The box is only outlined in the picture for the sake of clarity.}
%\label{fig:borromean}
%\end{figure}

In Figure~\ref{fig:100holesg} we can see that in this benchmark problem the time on small examples is dominated by the linking number computations so it is not strictly linear.

\begin{figure}
\begin{minipage}[b]{0.49\linewidth}
\begin{tikzpicture}[scale=0.90]
    \begin{axis}[
            legend pos= south east,
            %x tick label style={/pgf/number format/1000 sep=},
        ]
\addplot[mark=*,mark size=1.5pt, red,mark color=red,] coordinates
{
(64007,2.044)
(434513,27.86)
(3794183,138.1)};
\addplot[mark=square*,mark size=1.5pt, blue,mark color=blue,] coordinates
{
(65396,5.400)
(523825,48.17)
(1908897,60.30)
(12956479,264.3)};
\legend{GMSH,This paper};
\end{axis}
\end{tikzpicture}
\subcaption{Plate with holes benchmark.}
\end{minipage}
\begin{minipage}[b]{0.49\linewidth}
\begin{tikzpicture}[scale=0.90]
    \begin{axis}[
            legend pos= south east,
            %x tick label style={/pgf/number format/1000 sep=},
        ]
\addplot[mark=*,mark size=1.5pt, red,mark color=red,] coordinates
{
(16636,2.044)
(63810,27.86)
(107910,138.1)};
\addplot[mark=square*,mark size=1.5pt, blue,mark color=blue,] coordinates
{
(16636,5.400)
(63810,48.17)
(107910,60.30)
(358470,264.3)};
\legend{GMSH,This paper};
\end{axis}
\end{tikzpicture}
\subcaption{Plate with holes benchmark.}
\end{minipage}
\caption{Time [s] vs mesh density [on the left number of faces, on the right number of faces on the boundary] for the GMSH code and the implementation of the algorithm proposed in this paper.}\label{fig:100holesg}
\end{figure}
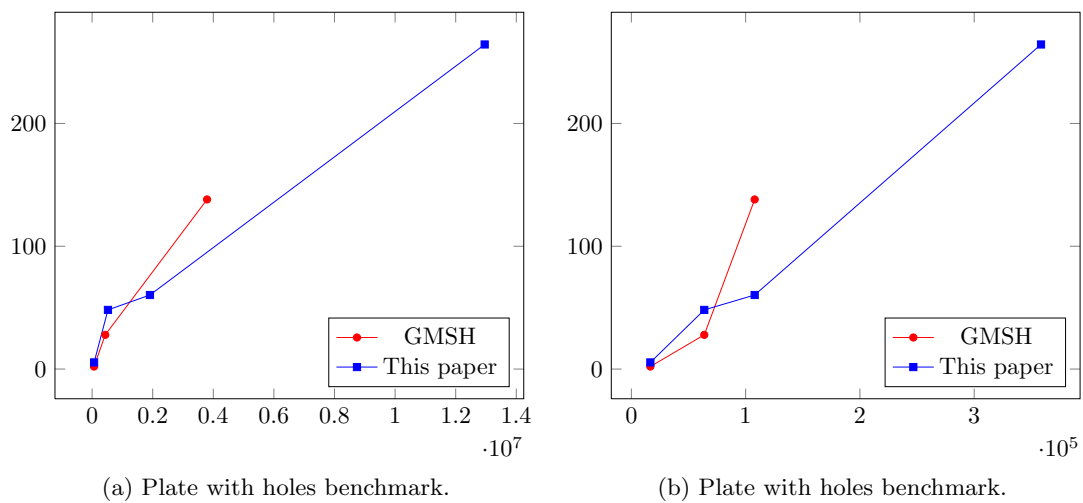

%%%%%%%

%
%
%\section{Computation of a basis of $H_2(\overline \Omega, \partial \Omega; \Z)$} \label{sec:numerical}
%
%\begin{itemize}
%\item
%A torus with a concentric toroidal cavity (dim($H_2(\overline \Omega, \partial \Omega; \Z)$)=2).
%
%Two trivial linear systems $1 \times 1$.
%
%\item
%A torus with a knotted concentric cavity (dim($H_2(\overline \Omega, \partial \Omega; \Z)$)=2).
%
%Two trivial linear systems $1 \times 1$.
%
%\item
%A cube without the Borromean rings (dim($H_2(\overline \Omega, \partial \Omega; \Z)$)=3).
%
%Two trivial linear systems $1 \times 1$.
%
%
%
%\item
%A cube without the Hopf link (dim($H_2(\overline \Omega, \partial \Omega; \Z)$)=2).
%
%In fact $\sigma'_m =\sigma_m$ $m=1,2,3$ because in the three $1 \times 1$ linear systems that we solve to compute $\sigma'_m$ from $\sigma_m$ the right hand term is zero.
%
%
%\item
%A two torus, without a three torus (dim($H_2(\overline \Omega, \partial \Omega; \Z)$)=5).
%
%Two $3 \times 3$ linear systems and three $2 \times 2$ linear systems?
%
%
%\item
%A torus with a concentric toroidal cavity and another toroidal cavity around the previous one (like the first example with a new toroidal cavity) (dim($H_2(\overline \Omega, \partial \Omega; \Z)$)=3).
%
%Six trivial linear systems $1 \times 1$.
%
%
%\end{itemize}
%
%How can we check that the computed surfaces are correct?????
%
%
%%%%%%%

\bibliographystyle{siam}
\bibliography{BaseH2-2}

\end{document}